\let\csname equation*\endcsname\relax
\let\csname endequation*\endcsname\relax
\providecommand{\keywords}[1]{\textbf{Keywords:  } #1}
\newtheorem{thm}{Theorem}[section]
\newtheorem{prop}{Proposition}[section]
\newtheorem*{thmn}{Theorem}
\newtheorem{lem}{Lemma} [section]
\newtheorem{hip}{Hypothesis}[section]
\newtheorem*{pro}{Property}
\newcommand{\foune}{\mathcal{F}_{\rho \, , s}}
\newcommand{\klio}{k \,\mathcal{F}_{\rho \, , s} P(\xi)-1}
\newcommand{\klin}{k \,\mathcal{F}_{\rho \, , s} P(\xi)-1+(2\pi i \xi+j)\alpha}
\newcommand{\R}{\mathbb{R}}
\newcommand{\K}{\mathbb{K}}
\newcommand{\Ld}{L^2(\mathbb{R})}
\newcommand{\D}{\mathcal{D}(U)}
\newcommand{\PW}{PW_{\rho \, , s}(U)}
\newcommand{\HNL}{H_{\alpha}(N,\lambda)}
\newcommand{\HN}{H(N,\lambda)}
\newcommand{\Hg}{H^\alpha}
\newcommand{\Ne}{N_{\varepsilon}}
\newcommand{\lae}{\lambda_\varepsilon}
\newcommand{\HNLE}{H_{\alpha}(N_{\varepsilon},\lambda_{\varepsilon})}
\newcommand{\ldn}{L_{\rho \, , s}^2}
\newcommand{\eci}{k \mathcal{K}(H)-H}
\newcommand{\ecie}{k \mathcal{K}H_{\alpha}(N, \lambda)-H_{\alpha}(N, \lambda)}
\newcommand{\ecd}{\frac{d}{dx} gN+\lambda N}
\newcommand{\cosafea}{(2i \xi-s)\pi \, \mathcal{F}_{\rho \, , s}(gN \frac {d}{dx}\rho^{-1})(\xi)
\\
-\mathcal{F}_{\rho \, , s}(gN(\frac{d^2}{dx^2}\rho^{-1})(\frac{d}{dx}\rho^{-1})^{-1})(\xi)}
\newcommand{\derifeapr}{\frac {d}{dx}\rho^{-1}}
\newcommand{\derifeaseg}{( \frac{d^2}{dx^2}\rho^{-1})(\frac{d}{dx}\rho^{-1})^{-1}}
\newcommand{\SaN}{\alpha \frac{d}{dx} (\HNL (\derifeapr)^{-1})+ \alpha \HNL (\frac{d^2}{dx^2} \rho ^ {-1})(\frac{d}{dx} \rho ^ {-1})^{-2}+ j \alpha \HNL}
\begin{document}


\title[Preprint]{Recovering the Fragmentation Rate in the Growth-Fragmentation Equation}

\author{Alvaro Almeida Gomez}

\address{Khalifa University, P.O. Box 127788, Abu Dhabi, United Arab Emirates}
\ead{alvaro.gomez@ku.ac.ae}

\author{Jorge P. Zubelli}

\address{Khalifa University, P.O. Box 127788, Abu Dhabi, United Arab Emirates}
\ead{zubelli@gmail.com}

\vspace{10pt}
\begin{indented}
\item[]February 2022
\end{indented}

\begin{abstract}
We consider the inverse problem of determining the fragmentation rate from noisy measurements in the growth-fragmentation equation. We use Fourier transform theory on locally compact groups to treat this problem for general fragmentation probabilities. We develop a regularization method based on spectral filtering, which allows us to deal with the inverse problem in weighted ${L}^2$ spaces. 
As a result, we obtain a regularization method with error of order $O(\varepsilon^{\frac{2m}{2m+1}})$, where $\varepsilon$ is the noise level and $m>0$ is the {\em a priori} regularity order of the fragmentation rate.
\end{abstract}

 \keywords{Size-structured models; Regularization techniques;  Fourier transform on groups; growth-fragmentation equation; mathematical biology models. }

\section{Introduction}
Growth-fragmentation equations describe, in a quantitative way, the evolution process of the density of an ensemble of particles that divide according to a certain fragmentation rate. In other words, we assume that each particle grows over time, and splits into smaller particles, in such a way that the total mass is conserved. This model is used in biological phenomena for instance, in cell division processes \cite{celdi1,celdi2,celdi3,laurent} and protein polymerization \cite{propol1}. It also appears in telecommunications \cite{teleco}. 
The goal of this article is to address the inverse problem for a class of such equations making use of the asymptotic behavior of their solutions which in turn, after proper scaling, converge to a stable distribution in time~\cite{celdi1,pery1,mimipe}.

The growth-fragmentation equation considered here can be expressed using the following integro-differential equation,
\begin{equation}
\left\{\begin{array}{lr}
\frac{\partial}{\partial t}n(t,x)+\frac{\partial}{\partial x}(g(x)n(t,x))=-B(x)n(t,x)+k \int_0^\infty  \K (x | y)\,B(y)\,n(t,y) dy,\\\\[0.05cm]
n(0,x)=n_{0}(x),\\ 
\\[0.05cm]
g(0)n(t,0)=0, \\ 
\end{array}\right.
\end{equation}
where $n(t,x)$ represents the density of particles of mass $x$ at  time $t$, with initial condition $n_{0}(x)$. The function $g(x)$ is the growth rate for particles of mass $x$. The function  $\K(x |y)$  represents the conditional probability   that a particle of mass $y$ splits into $k$ smaller particles of mass $x$. The function  $B(x)$ is the fragmentation rate of particles of mass $x$.
\par A natural question concerns to the asymptotic behaviour of the population density $n(t,x)$, when $t \rightarrow \infty$.  In \cite{pery1,mimipe}, it is shown that under certain conditions on the coefficients $g$, $B$, and $\K$, there exists an unique eigenpair $(N, \lambda)$ such that
\begin{equation}
\left\{\begin{array}{lr}
\frac{d}{dx}(g(x)N(x))+(B(x)+\lambda)N(x)=k \int_0^\infty \K(x | y)\,B(y)\,N(y) dy, \\\\[0.05cm]
g(0)N(0)=0,\\ 
\\[0.05cm]
N(x>0)>0 , \\ \\[0.05cm]
\int_{0}^{\infty} N(x)dx=1.
\end{array}\right.
\end{equation}
Moreover, in appropriate weighted norms, we have that $n(t,x)e^{-\lambda t} \rightarrow N(x),$ when the time $t$ goes to infinity.

Thus, we shall focus on the pair $(N(x), \lambda)$,
which corresponds to the eigenpair of a linear operator \cite{pery1}. They have the biophysical interpretation of the asymptotic distribution of the population density and the corresponding convergence rate. In principle, such quantities could be experimentally observed in some situations due to the exponential decay rate.  This was used for instance in~\cite{domazu}.
The function $N$ alluded to above is the stable distribution which is observed in a broad class of structured population models \cite{celdi1}.
The importance of studying $ N (x) $ instead of $ n (t, x) $ is that the variable $ t $ is removed, which reduces the study of a two-dimensional problem to a one-dimensional. 

\par We are interested in how to recover in a stable way the fragmentation rate $B(x)$ from noisy measurements of $N(x)$ and $\lambda$. Our strategy is to study $H=BN$ rather than $B$, as proposed in \cite{dopezu,bodoes}, and then use truncated division by $N$ to recover $B$. Thus, the inverse problem under consideration is how to recover in a stable way $H$ from approximate knowledge of $N$ and $\lambda$, in which $H$ is the solution of 
\begin{equation} 
\label{meq}\eci =\ecd ,
\end{equation}
where 
\begin{equation}
    \mathcal{K}(H)=\int_0^{\infty}\K(x | y) \, H(y)\,dy .
    \label{operadork}
\end{equation}
In \cite{dopezu,pezu,domazu} the inverse problem was treated for the equal mitosis case, that is, when a particle splits into two small particles with half of its mass. In this case, the parameters are
\begin{equation}
   k=2 \quad \textrm{and} \quad\K(x | y)=\delta_{x=\frac{y}{2}} 
\end{equation} 
A more general class of conditional probability density kernels is given by the self-similar ones, that is, when 
\begin{equation}
    \K(x|y)=\frac{1}{y}P(\frac{x}{y}),
\end{equation}
where $P$ is a probability measure in $[0,1]$. Self-similar probabilities arise when the fragmentation depends on the ratio between the size of the mother particle and the size of the next-generation particle. The inverse problem for self-similar probabilities was treated in the works \cite{bodoes,doti}. \par The aim of the present work is to address this inverse problem for more general probabilities. Namely, we consider conditional probabilities of the form
\begin{equation}\label{ref:fragmenker}
    \K(x|y)= P(x\odot_{\rho} y^{-1}) \,\, \, \frac {d}{dz}\rho^{-1}(y), 
\end{equation}
where $\rho$ is an increasing diffeomorphism from $\R$ to $\mathbb{R^{+}}$, and $P$ is a probability on $(0,\rho (0))$. The transport operation $\odot_{\rho}$ is defined by
\begin{equation}
    a \odot_{\rho} b := \rho ( \,\rho^{-1} (a)+ \rho^{-1} (b)\,).
\end{equation}
 We shall denote conditional densities in \eqref{ref:fragmenker}  as transport probabilities. Observe that if we take the exponential function $\rho(x)=e^x$, then the transport operation for this function is the usual product on the positive real line.  Hence, transport probabilities generalize self-similar probabilities. 
\par We treat the inverse problem described in Eq.~\eqref{meq}. Here, we do not require that the mass must be conserved in the division process. This is in order not to restrict this method only to biological models, and also to use it in real-world applications. treating this problem from a more general point of view. We do that in two steps. Firstly, we guarantee that under some assumptions in $N$, there exists a unique $H$ solution of Eq.~\eqref{meq}. To do that, we establish that on proper spaces the operator $k \mathcal{K}-Id$ is an isomorphism. The second step is to guarantee the stability of the inverse problem. For that, we propose a new regularization method. This method is based on the implementation of the Fourier transform theory, and spectral filtering techniques.
\par We show that the Quasi-Reversibility approach \cite{pezu} is a particular case of our method.  Compared to regularization methods based on convolution \cite{bodoes,doti}, our method improves the error order. Namely, we obtain an error of order $O(\varepsilon^{\frac{2m}{2m+1}})$, where $\varepsilon$ is the noise level.
\par This article is organized as follows: in Section \ref{preli}, we study the transport operation and some relations with the Fourier transform on locally compact abelian groups. In Section \ref{sectinver}, we discuss the invertibility of the operator $k \mathcal{K}-Id$ in proper spaces. In Section \ref{secregu}, we present a new regularization method to treat the stability of the inverse problem. In Section \ref{ejempl}, we give examples for which some of the Hypotheses Eq.~\eqref{hipbobe} or Eq.~\eqref{hipunbobe} are satisfied. Finally, in Section \ref{numer}, we present the numerical implementation of our method.

\section{Preliminaries}\label{preli}
To fix the notation, we briefly review some concepts of Fourier transform for locally compact groups. We refer the reader to \cite{rud} for more details. We recall that the Haar measure on an abelian group $G$ is the unique non-negative and regular measure, up to a positive multiplicative constant, which is translation invariant. Here, by translation, we mean the action of multiplication by group elements. 
\par We define the transport operation $\odot_{\rho}$ as
\begin{equation}
    a \odot_{\rho} b := \rho ( \,\rho^{-1} (a)+ \rho^{-1} (b)\,),
\end{equation}
where $\rho$ is an increasing diffeomorphism from $\R$ to $\mathbb{R^{+}}$.
The pair $(\mathbb{R^{+}},\odot_{\rho}),$ equipped with the induced topology of $\mathbb{R},$ is a locally compact group. Moreover, the Haar measure $\mu_{\rho}$ is given by the  measure 
\begin{equation}
  d\mu_{\rho}=\frac {d}{dx}\rho^{-1} dx ,  
\end{equation}
where $dx$ is the Lebesgue measure on the positive real line. In this context, we can develop the theory of the Fourier transform on the group $(\mathbb{R^{+}},\odot_{\rho},\mu_{\rho})$.

\subsection{Fourier transform on \texorpdfstring{$(\mathbb{R^{+}},\odot_{\rho},\mu_{\rho})$.}{fouriertrasnform}}
The Fourier transform $\mathcal{F_{\rho}}$ on the group $(\mathbb{R^{+}},\odot_{\rho})$ is defined for a function  $ f(x)\in L^1(\mathbb{R^{+}},\mu_{\rho}dx)$ and a real number $\xi$ as
\begin{equation}
    \mathcal{F_{\rho}}f(\xi)=\int_0^\infty  f(x) \, e^{-2 \pi i \xi \rho^{-1}(x)}\, d \mu_{\rho}.
\end{equation}
In this general context, the Fourier transform theory on  $(\mathbb{R^{+}},\odot_{\rho})$ can be developed in a similar fashion as in the standard case $ (\mathbb{R},+)$. In particular, we obtain the inversion and the Plancherel theorems. See \cite{rud} for more details.
\begin{thmn}[Inversion theorem~\cite{rud}]
Suppose that $ f\in L^1(\mathbb{R^{+}},\mu_{\rho}dx)$ and $\mathcal{F_{\rho}}f\in L^1(\mathbb{R},dx),$ then for a.e positive number $x$ we have
\begin{equation}
  f(x)=\int_{-\infty}^{+\infty} \mathcal{F_{\rho}}f(\xi) \,e^{2 \pi x \rho^{-1}(\xi)}\, d\xi.  
\end{equation}
\end{thmn}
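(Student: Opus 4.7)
The plan is to reduce the inversion formula on $(\mathbb{R}^{+},\odot_{\rho},\mu_{\rho})$ to the classical Fourier inversion theorem on $(\mathbb{R},+,dy)$ by exploiting the fact that $\rho$ is a topological group isomorphism from $(\mathbb{R},+)$ onto $(\mathbb{R}^{+},\odot_{\rho})$. The identity $\rho^{-1}(a\odot_{\rho}b)=\rho^{-1}(a)+\rho^{-1}(b)$ expresses precisely that $\rho$ is a group homomorphism, and by hypothesis it is also an increasing diffeomorphism, hence a homeomorphism of the underlying topological spaces.

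First I would push $f$ to $\mathbb{R}$ by setting $\tilde{f}(y):=f(\rho(y))$. The change of variables $x=\rho(y)$ turns the Haar measure $d\mu_{\rho}=\frac{d}{dx}\rho^{-1}(x)\,dx$ into Lebesgue measure $dy$ on $\mathbb{R}$, so $f\in L^{1}(\mathbb{R}^{+},\mu_{\rho})$ is equivalent to $\tilde{f}\in L^{1}(\mathbb{R},dy)$ with identical norm. Substituting directly in the definition of $\mathcal{F}_{\rho}$ yields
\[
\mathcal{F}_{\rho}f(\xi)=\int_{0}^{\infty}f(x)\,e^{-2\pi i\xi\rho^{-1}(x)}\,\frac{d}{dx}\rho^{-1}(x)\,dx=\int_{-\infty}^{+\infty}\tilde{f}(y)\,e^{-2\pi i\xi y}\,dy=\widehat{\tilde{f}}(\xi),
\]
where $\widehat{\,\cdot\,}$ denotes the classical Fourier transform on $\mathbb{R}$. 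In other words, the group-theoretic Fourier transform of $f$ is simply the Euclidean Fourier transform of the pullback $\tilde{f}$.

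Next, the hypothesis $\mathcal{F}_{\rho}f\in L^{1}(\mathbb{R},d\xi)$ becomes $\widehat{\tilde{f}}\in L^{1}(\mathbb{R},d\xi)$, which is exactly what is needed to invoke the classical Fourier inversion theorem for $\tilde{f}$, giving for almost every $y\in\mathbb{R}$
\[
\tilde{f}(y)=\int_{-\infty}^{+\infty}\widehat{\tilde{f}}(\xi)\,e^{2\pi i\xi y}\,d\xi.
\]
Setting $y=\rho^{-1}(x)$ and replacing $\widehat{\tilde{f}}$ by $\mathcal{F}_{\rho}f$ recovers the stated formula; because $\rho^{-1}$ is a diffeomorphism, it maps Lebesgue-null sets to Lebesgue-null sets, so the ``almost every $y\in\mathbb{R}$'' conclusion transfers to an ``almost every $x>0$'' conclusion, as required.

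The only real work is book-keeping of the change of variables and the null-set transfer under $\rho^{-1}$; no substantive analytic obstacle is expected, since all the heavy lifting is delegated to the classical Euclidean inversion theorem once the identification $\mathcal{F}_{\rho}f=\widehat{f\circ\rho}$ has been established.
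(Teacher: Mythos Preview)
The paper does not actually prove this theorem; it is stated as a citation from Rudin's \emph{Fourier Analysis on Groups} and used as background. Your argument is exactly the natural one: conjugation by the group isomorphism $\rho:(\mathbb{R},+)\to(\mathbb{R}^{+},\odot_{\rho})$ identifies $\mathcal{F}_{\rho}f$ with the Euclidean Fourier transform of $f\circ\rho$, and the classical inversion theorem does the rest. The change of variables and null-set transfer are handled correctly.

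One remark: your computation yields the exponent $e^{2\pi i\,\xi\,\rho^{-1}(x)}$, which is the correct inverse character on $(\mathbb{R}^{+},\odot_{\rho})$. The displayed formula in the paper reads $e^{2\pi x\,\rho^{-1}(\xi)}$, which appears to be a typographical slip (missing $i$ and with $x$ and $\xi$ interchanged in the exponent); your version is the one that actually follows from the argument and matches the forward transform defined just above.
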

\begin{thmn}[Plancherel Theorem~\cite{rud}]
The Fourier transform $\mathcal{F_{\rho}}$, restricted to $L^1(\mathbb{R^{+}},\mu_{\rho}dx) \cap \, L^2(\mathbb{R^{+}},\mu_{\rho}dx)$ is an isometry $( with\,respect\, to \,the\, L^2 \,norm)$ onto a dense linear subspace of $L^2(\mathbb{R},dx)$. Hence, it may be extended, in a unique manner, to an isometry from $L^2(\mathbb{R^{+}},\mu_{\rho}dx)$ to $L^2(\mathbb{R},dx).$
\end{thmn}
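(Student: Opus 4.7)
The plan is to reduce the statement to the classical Plancherel theorem on $(\mathbb{R},+)$ by transporting the entire structure along $\rho^{-1}$. The key structural observation is that the identity
\[
\rho^{-1}(a \odot_{\rho} b) = \rho^{-1}(a) + \rho^{-1}(b)
\]
says exactly that $\rho^{-1}$ is a topological group isomorphism from $(\mathbb{R}^{+},\odot_{\rho})$ onto $(\mathbb{R},+)$. Under such an isomorphism, Haar measure and the Fourier transform are transported canonically, and the Plancherel theorem in the target carries back to the source.

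Concretely, I would introduce the pullback operator $U: f \mapsto f \circ \rho$, sending functions on $\mathbb{R}^{+}$ to functions on $\mathbb{R}$. The change of variables $u = \rho^{-1}(x)$, with $du = \frac{d}{dx}\rho^{-1}(x)\,dx = d\mu_{\rho}$, shows that $U$ restricts to a linear surjective isometry between $L^{p}(\mathbb{R}^{+},\mu_{\rho}\,dx)$ and $L^{p}(\mathbb{R},dx)$ for $p = 1$ and $p = 2$, and in particular between $L^{1}\cap L^{2}$ of the two measure spaces. Substituting $u = \rho^{-1}(x)$ into the defining integral for $\mathcal{F}_{\rho}$ yields
\[
\mathcal{F}_{\rho} f(\xi) = \int_{-\infty}^{+\infty} f(\rho(u))\, e^{-2\pi i \xi u}\, du = \widehat{U f}(\xi),
\]
where $\widehat{\,\cdot\,}$ denotes the standard Fourier transform on $\mathbb{R}$. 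Thus $\mathcal{F}_{\rho} = \widehat{\,\cdot\,} \circ U$ as operators on $L^{1}\cap L^{2}$.

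The conclusion follows by composing two isometric facts. By the classical Plancherel theorem, $\widehat{\,\cdot\,}$ restricted to $L^{1}(\mathbb{R})\cap L^{2}(\mathbb{R})$ is an isometry into $L^{2}(\mathbb{R})$ with dense image, and extends uniquely to a unitary operator on $L^{2}(\mathbb{R})$. Since $U$ is itself a surjective isometry of the corresponding $L^{1}\cap L^{2}$ and $L^{2}$ spaces, the identity $\mathcal{F}_{\rho} = \widehat{\,\cdot\,} \circ U$ transfers both the isometry property and the density of the image back to $\mathcal{F}_{\rho}$, and the unique extension to all of $L^{2}(\mathbb{R}^{+},\mu_{\rho}\,dx)$ is obtained in the standard way by density.

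The only nontrivial step is verifying that the change-of-variables formula applies uniformly for $L^{1}$ and $L^{2}$ norms and that $U$ preserves the intersection $L^{1}\cap L^{2}$; once these are in place, no genuine analytic obstacle remains, since everything reduces to transport of structure along a group isomorphism. In particular, the fact that $\rho^{-1}$ is a diffeomorphism (rather than merely a continuous group isomorphism) is what makes the Jacobian computation and the measurability arguments entirely routine.
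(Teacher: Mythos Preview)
Your argument is correct: transporting along the group isomorphism $\rho^{-1}$ reduces the statement to the classical Plancherel theorem on $(\mathbb{R},+)$, and the factorization $\mathcal{F}_{\rho}=\widehat{\,\cdot\,}\circ U$ immediately yields both the isometry property and the density of the range. The paper does not actually prove this theorem; it simply cites the general Plancherel theorem for locally compact abelian groups from Rudin, of which the present statement is the specialization to $(\mathbb{R}^{+},\odot_{\rho},\mu_{\rho})$. Your explicit reduction via $\rho^{-1}$ is precisely how that general result collapses to the classical one in this case, so your approach is not merely compatible with the paper's citation but is in effect the concrete content behind it.
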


\subsection{\texorpdfstring{The $L_{\rho \, , s}^2$ spaces.}{L2spac}}

In order to extend the Plancherel theorem, we define the $L_{\rho \, , s}^2$ space. They are a natural generalization of the $L^2(\mathbb{R^{+}},\mu_{\rho}dx)$ space. More precisely, the $L_{\rho \, , s}^2$ space is defined as $L^2(\mathbb{R^{+}},e^{2\pi  s \rho^{-1}(\cdot)}\mu_{\rho}).$
Observe that there exists a canonical isomorphism $T_{s}$ between  $L_{\rho \, , s}^2$ and  $L_{\rho \, , 0}^2=L^2(\mathbb{R^{+}},\mu_{\rho}dx),$ which is given by
\begin{equation}
 T_{s}f(x)=f(x) \,e^{\pi  s \rho^{-1}(x)}.   
\end{equation}
We now define the Fourier transform $\mathcal{F}_{\rho \, , s}$ on $L_{\rho \, , s}^2$ as
\begin{equation}
  \mathcal{F}_{\rho \, , s}=\mathcal{F}_\rho \, \circ \,T_{s}.  
\end{equation}
Thus, by the Plancherel Theorem, the generalized Fourier transform $\mathcal{F}_{\rho \, , s}$ is an isometry.

For the case when $P$ is a probability measure on $R^{+}$ satisfying
\begin{equation}            
\label{conditionker} \int_0^\infty e^{\pi s \rho^{-1}(z)}\frac {d}{dx}\rho^{-1}(z) dP(z)<\infty \mbox{, }
\end{equation}
we define the Fourier transform $\mathcal{F}_{\rho \, , s}$ of the probability $P$ by
\begin{equation}
  \mathcal{F}_{\rho \, , s}P (\xi)= \int_0^\infty e^{(-2 \xi i+s)\pi   \rho^{-1}(z)}\frac {d}{dz}\rho^{-1}(z) dP(z),  
\end{equation}
for any real number $\xi.$
\subsection{The Fourier transform and the convolution operator}
We now consider the convolution-type operator induced by the probability $P$ and  given by
\begin{equation}  
\label{opker} \Tilde{\mathcal{K}}(f)(x)=\int_0^\infty P(x \odot_{\rho} y^{-1} )f(y)\, d\mu_{\rho}(y).
\end{equation}
In the same manner, as in the case of $\mathbb{R},$ and under some conditions of integrability, the Fourier transform $\mathcal{F}_{\rho \, , s}$ of a convolution operator becomes a multiplicative operator.
\begin{pro}Let $P$ be a probability measure satisfying Equation  \eqref{conditionker}, then the convolution operator defined in Eq. \eqref{opker} satisfies the multiplication property
\begin{equation}
\label{convmul}\mathcal{F}_{\rho \, , s} \Tilde{\mathcal{K}}(f)(\xi)=\mathcal{F}_{\rho \, , s}P (\xi)  \, \cdot \, \mathcal{F}_{\rho \, , s}f(\xi),
\end{equation}
for all $f$ in $L_{\rho \, , s}^2$.
\end {pro}

\subsection{The Fourier transform of \texorpdfstring{$\frac{d}{dx} gN$.}{fourierderi} }

The Fourier transform on the real line has the property of diagonalizing differential operators. We now state an analogous version of this fact for the Fourier transform $\mathcal{F}_{\rho \, , s}$.
\begin{prop}
\label{fouanddif} Suppose that $S$ belongs to the Sobolev space $H^{1}_0 (0, \infty)$,  and that the functions
\begin{equation}
 \label{condiderivat}\frac{d}{dx} S\,  , \, \, S \frac {d}{dx}\rho^{-1} \,  , \, \, S(\frac{d^2}{dx^2}\rho^{-1})(\frac{d}{dx}\rho^{-1})^{-1},
\end{equation}
are in $L_{\rho \, , s}^2$. Then, we have

\begin{align}
\label{quasemultipl}
\begin{split}
\mathcal{F}_{\rho \, , s}(\frac{d}{dx} S)(\xi)= (2 \xi i-s)\pi \, \mathcal{F}_{\rho \, , s}(S \frac {d}{dx}\rho^{-1})(\xi)
\\
-\mathcal{F}_{\rho \, , s}(S(\frac{d^2}{dx^2}\rho^{-1})(\frac{d}{dx}\rho^{-1})^{-1})(\xi) .
\end{split}
\end{align}
\end{prop}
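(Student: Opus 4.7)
The plan is to prove the identity directly by integration by parts on the positive real line, after rewriting the Fourier transform in its integral form relative to Lebesgue measure.

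First I would unfold all the definitions. By the definition of $T_s$ and $\mathcal{F}_\rho$, the generalized Fourier transform can be written concretely as
\begin{equation*}
\mathcal{F}_{\rho,s} f(\xi) \;=\; \int_0^\infty f(x)\, e^{(-2\xi i+s)\pi \rho^{-1}(x)}\, \tfrac{d}{dx}\rho^{-1}(x)\, dx,
\end{equation*}
matching the formula already used for probability measures in the excerpt. Applying this to $f=\frac{d}{dx}S$ gives an integral whose integrand is the product of $S'(x)$ and the function
\begin{equation*}
u(x) \;:=\; e^{(-2\xi i+s)\pi \rho^{-1}(x)}\, \tfrac{d}{dx}\rho^{-1}(x).
\end{equation*}
The strategy is to integrate by parts, moving the derivative off $S'$ onto $u$.

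The second step is the derivative computation. A direct differentiation gives
\begin{equation*}
u'(x) \;=\; (-2\xi i+s)\pi \bigl(\tfrac{d}{dx}\rho^{-1}(x)\bigr)^{2} e^{(-2\xi i+s)\pi \rho^{-1}(x)} + \tfrac{d^{2}}{dx^{2}}\rho^{-1}(x)\, e^{(-2\xi i+s)\pi \rho^{-1}(x)}.
\end{equation*}
Substituting this back, factoring one copy of $\tfrac{d}{dx}\rho^{-1}$ out of the second term as $\bigl(\tfrac{d^2}{dx^2}\rho^{-1}\bigr)\bigl(\tfrac{d}{dx}\rho^{-1}\bigr)^{-1}\cdot \tfrac{d}{dx}\rho^{-1}$, and reading off the two resulting integrals as $\mathcal{F}_{\rho,s}$ of $S \tfrac{d}{dx}\rho^{-1}$ and of $S\bigl(\tfrac{d^2}{dx^2}\rho^{-1}\bigr)\bigl(\tfrac{d}{dx}\rho^{-1}\bigr)^{-1}$ respectively, produces exactly the right-hand side of \eqref{quasemultipl} with the correct sign $(2\xi i-s)\pi$ that comes from the minus sign of integration by parts.

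The main obstacle, and the reason the hypotheses are stated the way they are, is justifying the integration by parts and the splitting of the resulting integral. The boundary term at $0$ vanishes because $S\in H^{1}_{0}(0,\infty)$ forces $S(0)=0$, while the boundary term at $\infty$ must be handled via the $L^{2}_{\rho,s}$ integrability of $S\tfrac{d}{dx}\rho^{-1}$ together with $S'\in L^{2}_{\rho,s}$; concretely I would argue on a truncated interval $(\delta,R)$ and pass to the limit using these integrability conditions (this is where a density argument via compactly supported smooth functions in $H^{1}_{0}$ can be invoked if one wants to avoid pointwise manipulations). The three hypotheses in \eqref{condiderivat} are precisely what is needed so that every integral appearing after integration by parts is absolutely convergent and represents a well-defined element of the image of $\mathcal{F}_{\rho,s}$. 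Once this technical step is in place, the identity follows by inspection.
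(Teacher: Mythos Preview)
Your argument is correct and follows exactly the same route as the paper's own proof: write out the integral form of $\mathcal{F}_{\rho,s}(S')$, integrate by parts, compute the derivative of $e^{(-2\xi i+s)\pi\rho^{-1}(x)}\tfrac{d}{dx}\rho^{-1}(x)$, and identify the two resulting terms. If anything, you are more careful than the paper about the vanishing of the boundary terms and the role of the integrability hypotheses, which the paper dismisses with a one-line appeal to ``the smoothness assumption of the function $S$.''
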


\begin{proof}
Using integration  by parts, and  the smoothness assumption of the function $S$,  we have that
$$\mathcal{F}_{\rho \, , s}(\frac{d}{dx} S)(\xi)=-\int_0^\infty  S(z) \frac{d}{dz} \, (e^{(-2 \xi i+s) \pi \rho^{-1}(z)}\, \, \frac{d}{dz}\rho^{-1}(z)) \, \, dz.$$
Now, we observe that 
$$  \frac{d}{dz} \, (e^{(-2 \xi i+s) \pi \rho^{-1}(z)}\, \, \frac{d}{dz}\rho^{-1}(z)) = ((-2 \xi i+s)\pi (\frac{d}{dz}\rho^{-1}(z))^2+ \frac{d^2}{dz^2}\rho^{-1})e^{(-2 \xi i+s) \pi \rho^{-1}(z)},$$
 and this proves our result.
\end{proof}


\section{Invertibility of the operator \texorpdfstring{ $k \mathcal{K}-Id$.}{Invofoperator}} \label{sectinver}
 Let $\mathcal{K}: L_{\rho \, , s}^2 \to L_{\rho \, , s}^2 $ be  the operator defined as 
 \begin{equation}
    \mathcal{K}(H)=\int_0^{\infty}\K(x,y) \, H(y)\,dy .
\end{equation} 
We  use the ideas  of \cite{bodoes} in order to guarantee the invertibility of $k \mathcal{K}-Id$. We  prove that under additional assumptions on the probability $P$, the operator  $k \mathcal{K}-Id$, has a bounded inverse in some subspaces of $L_{\rho \, , s}^2$. If we apply the Fourier transform and the Eq.~\eqref{convmul}, we get that $k \mathcal{K}-Id$ is in fact a multiplication operator 
\begin{equation}
    \mathcal{F}_{\rho \, , s} (k \mathcal{K}-Id)f(\xi)=(k \,\mathcal{F}_{\rho \, , s} P(\xi)-1) \,\mathcal{F}_{\rho \, , s} f (\xi) .
\end{equation}
Thus, the operator $k \mathcal{K}-Id$ has a bounded inverse on $L_{\rho \, , s}^2$ if the function $$|k \,\mathcal{F}_{\rho \, , s} P(\cdot)-1 |,$$ is bounded from below by positive constant on the real line. 
In some cases, the function $|k \,\mathcal{F}_{\rho \, , s} P(\cdot)-1 |$ never vanishes, but goes to zero when $x$ converges to $0$ or $\infty$, in this situation, the function $|k \,\mathcal{F}_{\rho \, , s} P(\cdot)-1 |$ is bounded from below on compact sets of the real line. Moreover, for computational purposes, we are interested in a reconstruction on compact intervals. In this case, we focus on the invertibility of the operator on compact sets. To find a bounded inverse on an open set $U$, we assume that the probability $P$ satisfies the following hypothesis.
\begin{hip}
\label{hipbobe}There exists an open set $U,$ such that the function $|k \,\mathcal{F}_{\rho \, , s} P(\cdot)-1 |$ is bounded from below by a positive constant on $U$.
\end {hip}
In Section \ref{ejempl} we give examples where the previous hypothesis is satisfied. 
\par  We consider the Paley-Wiener spaces $\PW$, as the subspace of $L_{\rho \, , s}^2$ of all the functions $f$  whose Fourier transform $\mathcal{F}_{\rho \, , s}$ has support on $U$.
Observe that if $f$ is on $\PW$, then $\mathcal{K} f$ also lies on $\PW$. Thus, the operator $k \mathcal{K}-Id$ from $PW_{\rho \, , s}^{\alpha}$ to itself is well defined. Using the Fourier transform $\foune$, and Eq. \eqref{convmul}, we conclude the following proposition, which guarantees the existence of a bounded inverse.

\begin{prop}
Suppose that the probability $P$ satisfies Hypothesis \ref{hipbobe}, then the operator $k \mathcal{K}-Id$ from $\PW $ to itself has a bounded inverse.
\end {prop}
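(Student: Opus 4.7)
The plan is to pass to the Fourier side, where the operator becomes a pointwise multiplication, and then exploit Hypothesis \ref{hipbobe} to build the inverse as multiplication by a bounded reciprocal.

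First, by the Plancherel theorem, $\mathcal{F}_{\rho,s}$ is an isometry from $L_{\rho,s}^{2}$ onto $L^{2}(\mathbb{R})$, and by definition of $\PW$ it restricts to an isometry from $\PW$ onto the closed subspace $L^{2}(U) \subset L^{2}(\mathbb{R})$ consisting of functions supported in $U$. By Eq.~\eqref{convmul}, the operator $k\mathcal{K} - Id$ is conjugated by $\mathcal{F}_{\rho,s}$ to the multiplication operator $M_{m}$ on $L^{2}(U)$, where $m(\xi) := k\,\mathcal{F}_{\rho,s}P(\xi) - 1$. Thus it suffices to show $M_{m}$ has a bounded inverse on $L^{2}(U)$.

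Next, invoke the hypothesis: there exists $c > 0$ with $|m(\xi)| \ge c$ for all $\xi \in U$. Hence $1/m$ is well defined and measurable on $U$ with $\|1/m\|_{L^{\infty}(U)} \le 1/c$. Define a candidate inverse on $\PW$ by
\begin{equation*}
T f := \mathcal{F}_{\rho,s}^{-1}\!\left( \chi_{U} \cdot \frac{1}{m} \cdot \mathcal{F}_{\rho,s} f \right).
\end{equation*}
Multiplication by $\chi_{U}/m$ preserves $L^{2}(U)$ (its range stays supported in $U$), so $T$ maps $\PW$ to itself, and by Plancherel
\begin{equation*}
\|T f\|_{L_{\rho,s}^{2}} = \left\| \frac{\chi_{U}}{m}\, \mathcal{F}_{\rho,s} f \right\|_{L^{2}(\mathbb{R})} \le \frac{1}{c}\, \|f\|_{L_{\rho,s}^{2}},
\end{equation*}
so $T$ is bounded.

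Finally, verifying that $T$ is a two-sided inverse of $k\mathcal{K} - Id$ on $\PW$ reduces on the Fourier side to the pointwise identity $m(\xi)\cdot \chi_{U}(\xi)/m(\xi) = \chi_{U}(\xi)$, which acts as the identity on $L^{2}(U)$. There is no real obstacle here; the only subtle point is checking that $k\mathcal{K} - Id$ does map $\PW$ into itself (so that speaking of its inverse on this space makes sense), but this was already observed in the paragraph preceding the statement via Eq.~\eqref{convmul}, since multiplication by $m$ cannot enlarge the Fourier support.
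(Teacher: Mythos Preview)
Your argument is correct and is precisely the approach the paper has in mind: conjugating $k\mathcal{K}-Id$ by the isometry $\mathcal{F}_{\rho,s}$ to the multiplication operator by $m(\xi)=k\,\mathcal{F}_{\rho,s}P(\xi)-1$ on $L^{2}(U)$, and then using Hypothesis~\ref{hipbobe} to bound $1/m$ and construct the inverse. The paper does not spell out the details beyond the sentence preceding the proposition, so your write-up is simply a fleshed-out version of the same idea.
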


\section{Regularization of the inverse problem} \label{secregu}

In this section, we discuss some methods to regularize the inverse problem, i.e, recover $H=BN$ from noisy measurements of $N$ in some $\ldn$ norm, in such a way that we can control the approximation error.
\par The main difficulty arises from the fact that we cannot estimate the $\ldn$ norm of $\frac{d}{dx} gN$ from $\|N\|$. The principal idea in this section is to use the Fourier transform to turn the differential operator $\frac{d}{dx}$ into a  Fourier multiplication operator \cite{xavierdua}, and then, we regularize the inverse problem using spectral filters as described in \cite{reguinvlib}.

\subsection{Regularization by spectral filtering}
We now present our strategy to regularize the inverse problem as follows. Using the Proposition \ref{fouanddif}, we see that under the Fourier transform, the differential operator is simply a quasi-multiplication operator, whose multiplication part is given by   $(2i \xi-s) \pi$. Unfortunately, this function is unbounded in non-compact sets. Then, noisy data with small errors can approximate solutions with large errors, that is, it has a de-regularizing effect. To  regularize the inverse problem, we define a multiplication approximation for $\frac{d}{dx}gN$, for which the multiplier is given by $(2i \xi-s) \pi$ by a regularized filter $f(\alpha,\xi)$. To be more precise, we define the approximation $\HNLE$ by 
\begin{equation}
\label{apro} \foune \HNL (\xi)=f(\alpha,\xi)\,L(N)(\xi)+ \, h(\alpha,\xi) \, \lambda \, \foune N(\xi),
\end{equation}
where
$$L(N)(\xi)=\cosafea.$$
Let $(\Ne,\lae)$ be a noisy measurement of $(N,\lambda)$ in the product space $\PW\, \times \,L^{\infty}(0,\infty) $, and let $H=\HN$ be the unique solution of Eq.~\eqref{meq} in $\PW$. The following theorem gives error estimates for $\|\HNLE-H(N,\lambda)\|$. It was inspired by \cite{dopezu}, yet, the method is different because it focuses on regularizing the spectral signal of the differential operators.

\begin{thm}[Spectral regularization]\label{spectregu} Let K be a subspace of $\PW$. Suppose that there exists a positive constant $C$, such that for all $\alpha \, \in [0,C)$ the bilinear operator $H_\alpha$ defined from $K \, \times \,L^{\infty}(0,\infty) $ to $\PW$ is bounded. Assume that $N \,\in \ldn$ satisfies
$$\|\HN-\HNL \| \le \alpha M,$$
where the constant $M$ does not depend on $\alpha$. Then, we have the estimate
$$ \|\HNLE-\HN\|\le \|H_{\alpha} (\Ne-N,\lae-\lambda)\|+\alpha M. $$
\end{thm}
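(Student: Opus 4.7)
My plan is the standard triangle-inequality decomposition that underlies most stability proofs for regularized inverse problems: insert and subtract the regularization applied to the exact data, namely $H_{\alpha}(N,\lambda)$. Concretely, I would start from
\[
\|\HNLE - \HN\| \le \|H_{\alpha}(\Ne, \lae) - H_{\alpha}(N, \lambda)\| + \|H_{\alpha}(N, \lambda) - H(N, \lambda)\|.
\]
The second summand is immediately controlled by the regularization-consistency hypothesis $\|\HN - \HNL\| \le \alpha M$ of the theorem, which bounds it by $\alpha M$. So all the work is in the first summand.

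For the first summand, the idea is to use the bilinear structure of $H_{\alpha}$ (as read off from the multiplier formula \eqref{apro}) to rewrite $H_{\alpha}(\Ne, \lae) - H_{\alpha}(N, \lambda)$ as $H_{\alpha}(\Ne - N, \lae - \lambda)$. The hypothesis that $H_{\alpha} : K \times L^{\infty}(0,\infty) \to \PW$ is bounded for every $\alpha \in [0, C)$ then makes $\|H_{\alpha}(\Ne - N, \lae - \lambda)\|$ a well-defined finite quantity, and combining the two estimates yields exactly the desired inequality.

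The main point requiring care is the passage from $H_{\alpha}(\Ne, \lae) - H_{\alpha}(N, \lambda)$ to $H_{\alpha}(\Ne - N, \lae - \lambda)$: a generic bilinear operator $B(x,y)$ satisfies $B(x_1, y_1) - B(x_2, y_2) = B(x_1 - x_2, y_1) + B(x_2, y_1 - y_2)$ rather than simply $B(x_1 - x_2, y_1 - y_2)$, so one must verify directly on the explicit Fourier-side expression \eqref{apro} — whose first summand $f(\alpha,\xi) L(N)(\xi)$ is linear in $N$ and whose second summand $h(\alpha,\xi)\lambda \foune N(\xi)$ is bilinear in $(N,\lambda)$ — that the notation $H_{\alpha}(\Ne - N, \lae - \lambda)$ on the right-hand side of the theorem is to be interpreted in a way that absorbs the relevant cross terms (equivalently, that $H_{\alpha}$ is being viewed here as a linear operator on the product space $K \times L^{\infty}(0,\infty)$). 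Once this bookkeeping is settled, the theorem is a one-line consequence of the triangle inequality and the two stated hypotheses.
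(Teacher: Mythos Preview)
Your proposal is correct and matches the paper's own proof, which is literally just the triangle inequality with $\HNL$ inserted and subtracted. Your extra care about the bilinearity bookkeeping is more scrupulous than the paper, which simply writes $\|\HNLE-\HNL\|$ for the first term and declares the result proved.
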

\begin{proof}
By the triangle inequality, we obtain
\begin{equation*}
\|\HNLE-\HN\|\le\|\HNLE-\HNL\|+\|\HNL-\HN\|,
\end{equation*}
which implies the result.
\end{proof}
To apply the above result, we need to guarantee that for a fixed $\alpha$, the operator $H_{\alpha}$ is well defined and bounded. For simplicity, we first consider the case when 
\begin{equation}
\label{condicre}  g\derifeapr, \quad g\derifeaseg, 
\end{equation}
and
\begin{equation}
\label{condifil}
\xi \to \quad \xi f(\alpha , \xi), \quad f(\alpha , \xi), \quad h(\alpha,\xi), 
\end{equation}
are bounded functions on $U$.
To show that the operator $H_\alpha$ is well defined, we consider the subspace $\D$ of $\PW$, which consists of all functions $N$ such that
$$gN\derifeapr, \quad gN\derifeaseg, $$
are in $\PW$. In that case, it is straightforward to show that the operator $H_{\alpha}$ from $\D \, \times \,L^{\infty}(0,\infty) $ to $\PW$ is well defined, and bounded. We use modified versions of Tikhonov and Landweber filters,  to show that all the functions in Equation \eqref{condifil} are bounded. These filters are commonly  used in regularization theory for compact  operators \cite{reguinvlib}.


\subsection{Tikhonov filtering}
In the classical case of linear operators, the solution $x$ to the problem $Ax=y$, where $A: \mathcal{H} \to  \mathcal{H}$ is a non-negative linear and compact operator, can be regularized by using the filter
$$\lambda \mapsto \frac{1}{\lambda+\alpha},$$
in the spectral variable.  Indeed, considering the singular value decomposition of $A$ given by 
$$A(x)= \sum_{n=1}^{\infty}  {\lambda_{n}} <u_n, x> v_n \mbox{ ,} $$
the regularized solution $x^\alpha$ to the problem
$A x = y  $ is given by 
$$x^{\alpha}=\sum_{n=1}^{\infty}  \frac{<v_n,y>}{\lambda_{n}+\alpha} u_n \mbox{ .}$$
See \cite{reguinvlib} for more details. For the problem under consideration, we modified the above function and consider the filter
\begin{equation}\label{tikfil}
f(\alpha,\xi)=  \frac{1}{\klio}\left( \frac{1}{1+\alpha  |\xi|} \right) \quad  \textrm{and} \quad h(\alpha,\xi)=\frac{1}{\klio} . 
\end{equation}
We assume that  the probability $P$ satisfies Hypothesis \ref{hipbobe}. Using these filters, we observe that the functions in Equation  \eqref{condifil} are bounded with respect to the variable $\xi$. Under these assumptions, the bilinear operator $H_{\alpha}$ is bounded. In fact, a straightforward computation shows that
$$\|H_{\alpha} (N, \lambda)\| \le C ( 1+\frac{1}{\alpha}+|\lambda|) \|N\|,$$
for some positive constant $C$, which only depends on $s$, $P$, $g$, $\rho ^{-1}$. 
\par We now state the following regularization method, based on the filter functions defined above. 


\begin{thm}[Tikhonov regularization] \label{tikteo}Assume that the probability $P$ satisfies Condition \ref{conditionker}, and  Hypothesis \ref{hipbobe}. Moreover, we assume that $N \in \D$ satisfies all the assumptions of Proposition \ref{fouanddif}, and 
$$\xi \, \foune(\frac{d}{dx} gN)  \in \Ld \, .$$
Then, for a noisy measurement $(\Ne,\lae)$ of $(N,\lambda)$ in $\D \, \times \,L^{\infty}(0,\infty) ,$ the approximation $\HNLE$ using the filters Eq.~\eqref{tikfil} satisfies the  estimate 
$$ \|H-\HNLE\| \le \,M\,(\alpha+(1+\frac{1}{\alpha}+|\lae-\lambda|) \,\|\Ne-N\|), $$
for some positive constant $M$ which depends on $s$, $P$, $g$, $\rho ^{-1}$, and $N$. 
\newline If $(\Ne,\lae)$ satisfy $\|\Ne-N \| \le \varepsilon$ , and 
$\|\lae-\lambda\| \le \varepsilon$. Then, we can choose the optimal parameter $ \alpha=\sqrt{\varepsilon} $ to conclude 
$$ \|H-\HNLE\| \le M(\sqrt{\varepsilon}+\varepsilon+\varepsilon^2). $$
\end{thm}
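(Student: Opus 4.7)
The plan is to apply Theorem \ref{spectregu} with the Tikhonov filters of Eq.~\eqref{tikfil}. This reduces the task to two ingredients: (a) verifying that the bilinear operator $H_\alpha$ is bounded from $\mathcal{D}(U) \times L^{\infty}(0,\infty)$ to $PW_{\rho,s}(U)$ with a quantitative bound, and (b) establishing the consistency estimate $\|H(N,\lambda) - H_\alpha(N,\lambda)\| \le \alpha M$ with $M$ independent of $\alpha$. Once both are in hand, Theorem \ref{spectregu} applied to the error pair $(N_\varepsilon - N, \lambda_\varepsilon - \lambda)$ via bilinearity of $H_\alpha$ produces the first displayed inequality.

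For (a), the decisive point is the uniform boundedness on $U$ of the three filter-related functions $f(\alpha,\xi)$, $\xi f(\alpha,\xi)$, and $h(\alpha,\xi)$. Hypothesis \ref{hipbobe} provides a uniform lower bound for $|k\mathcal{F}_{\rho,s}P(\xi)-1|$ on $U$, so $h$ and $f$ are bounded there; the additional Tikhonov factor $1/(1+\alpha|\xi|)$ ensures $|\xi f(\alpha,\xi)| \le C/\alpha$. Combined with the boundedness of the coefficients $g\,\frac{d}{dx}\rho^{-1}$ and $g(\frac{d^2}{dx^2}\rho^{-1})(\frac{d}{dx}\rho^{-1})^{-1}$ recorded in Eq.~\eqref{condicre}, Plancherel on $L^2(\mathbb{R})$ produces the announced estimate $\|H_\alpha(N,\lambda)\| \le C(1 + \frac{1}{\alpha} + |\lambda|)\|N\|$; by bilinearity the same bound applied to the error pair yields the term $C(1 + \frac{1}{\alpha} + |\lambda_\varepsilon - \lambda|)\|N_\varepsilon - N\|$.

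For (b), Fourier transforming Eq.~\eqref{meq} by means of Proposition \ref{fouanddif} (with $S = gN$) and Eq.~\eqref{convmul} gives
\begin{equation*}
\mathcal{F}_{\rho,s}H(\xi) = \frac{1}{k\mathcal{F}_{\rho,s}P(\xi)-1}\bigl(L(N)(\xi) + \lambda\,\mathcal{F}_{\rho,s}N(\xi)\bigr).
\end{equation*}
Subtracting the Fourier transform of $H_\alpha(N,\lambda)$ as defined in Eq.~\eqref{apro}, the $\lambda$-term cancels because $h(\alpha,\xi) = 1/(k\mathcal{F}_{\rho,s}P(\xi)-1)$, and the identity $1 - (1+\alpha|\xi|)^{-1} = \alpha|\xi|/(1+\alpha|\xi|)$ leaves
\begin{equation*}
\mathcal{F}_{\rho,s}\bigl(H - H_\alpha(N,\lambda)\bigr)(\xi) = \frac{1}{k\mathcal{F}_{\rho,s}P(\xi)-1}\cdot\frac{\alpha|\xi|}{1+\alpha|\xi|}\,\mathcal{F}_{\rho,s}\bigl(\tfrac{d}{dx}gN\bigr)(\xi).
\end{equation*}
Using $\alpha|\xi|/(1+\alpha|\xi|) \le \alpha|\xi|$ pointwise, the lower bound from Hypothesis \ref{hipbobe}, and the hypothesis $\xi\,\mathcal{F}_{\rho,s}(\frac{d}{dx}gN) \in L^2(\mathbb{R})$, Plancherel produces $\|H - H_\alpha(N,\lambda)\| \le \alpha M$ for a constant $M$ depending on $s$, $P$, $g$, $\rho^{-1}$, and $N$.

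Combining (a) and (b) via Theorem \ref{spectregu} gives the first inequality of the statement, and the choice $\alpha = \sqrt{\varepsilon}$ then balances the $\alpha$ term against the $\varepsilon/\alpha$ term to produce the stated $O(\sqrt{\varepsilon} + \varepsilon + \varepsilon^2)$ rate. The main (though modest) technical point is the algebra in step (b) isolating the factor $\alpha|\xi|/(1+\alpha|\xi|)$ and translating it, via the weighted $L^2$ hypothesis on $\xi\,\mathcal{F}_{\rho,s}(\frac{d}{dx}gN)$, into a clean $O(\alpha)$ bias; everything else is a direct application of Plancherel, the filter bounds from (a), and the triangle inequality packaged in Theorem \ref{spectregu}.
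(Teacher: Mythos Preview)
Your proposal is correct and follows exactly the same approach as the paper: compute $\mathcal{F}_{\rho,s}(H-H_\alpha(N,\lambda))$ via Proposition~\ref{fouanddif} and Eq.~\eqref{convmul}, isolate the factor $\frac{\alpha|\xi|}{1+\alpha|\xi|}$, use the weighted $L^2$ hypothesis on $\xi\,\mathcal{F}_{\rho,s}(\frac{d}{dx}gN)$ to get the $O(\alpha)$ bias, and invoke Theorem~\ref{spectregu}. You are in fact more explicit than the paper, which records the bound $\|H_\alpha(N,\lambda)\|\le C(1+\frac{1}{\alpha}+|\lambda|)\|N\|$ in the discussion preceding the theorem and then gives only the bias computation in the proof proper.
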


\begin{proof}
Using the Fourier transform in Eq. \eqref{meq}, together with Proposition \ref{fouanddif}, we have
$$ \foune(\HN-\HNL)= \frac{1}{\klio} \, \frac{\alpha |\xi|}{1+\alpha |\xi|}  \, \foune (\frac{d}{dx} gN ).$$
Therefore, we obtain
$$\| H-\HNL\| \le \alpha M' \left\|\xi \, \foune \,\left( \,\frac{d}{dx} gN \,\right) \right\|   \mbox{ ,}$$
for some constant $ M'$.
Hence, by Theorem \ref{spectregu} we obtain the  result.
\end{proof}



\subsection{The quasi-reversibility Tikhonov filtering}
The quasi-reversibility method proposed in  \cite{pezu} regularizes the inverse problem for the case of equal-mitosis. The idea of quasi-reversibility goes back to the work  \cite{lion}. This method is based on adding a perturbation of a differential operator. In \cite{doti}, an extension of this method was proposed for more general probabilities. We now present a different generalization using spectral filters. In fact, we show that there exists a relation between the quasi-reversibility method and our approach in the case of self-similar probabilities. 
\par Let us first describe our method without going into technical details. For each real number $j$, we defined the following modified Tikhonov filters
$$f^{j}(\alpha,\xi)=h^{j} (\alpha,\xi)=\frac{1}{\klin}.$$
If the quotient factor blows up, then the last equation is not well defined. To avoid this kind of problem, we assume that the probability $P$ satisfies a stronger hypothesis than \ref{hipbobe}, namely, we assume that
\begin{hip}
\label{hipunbobe}	There exist positive constants $M$ and $C$ such that $|\klin|\ge M$ for all $\xi \, \in U,$ and $\alpha \, \in [0,C]$.
\end {hip}
In Section \ref{ejempl}, we give some examples for which the above hypothesis is satisfied. If we assume that $N$ is  a smooth function  satisfying Condition \ref{suavidadcondi}, we see that under the Fourier transform $\foune$, the approximation in Eq.~\eqref{apro} solves the following differential equation
\begin{equation} \label{quasereg}
S_{\alpha}(N,\lambda)+\ecie=\ecd,
\end{equation}
where
$$S_{\alpha}(N,\lambda)= \SaN.$$
Thus, our method can be seen as a perturbation method~\cite{kato1995perturbation}. For the self-similar case, that is, when $\rho (x)=e^x$, we see that
$$S_{\alpha}(N,\lambda)=\alpha \frac{d}{dx} (x \HNL)+\alpha (j-1)\HNL .$$
Taking $j=1$,  the perturbation method defined in Eq. \eqref{quasereg}  reduces to the quasi-reversibility method proposed in \cite{doti}. Using the same ideas of the proof of Theorem \ref{tikteo}, we  prove the next result.

\begin{thm}[Quasi-reversibility method]

Assume that the probability $P$ satisfies Eq.~\eqref{conditionker} and Hypothesis~\ref{hipunbobe}. Moreover, assume that $N \in \D$ satisfies all the assumptions of Proposition \ref{fouanddif}. If the functions
$$\xi \, \foune(\frac{d}{dx} gN)  \quad and \quad \xi \, \foune(\frac{d}{dx} N), $$
are in $ L^2(\R)$, then, for all noisy measurement $(\Ne,\lae)$ of $(N,\lambda)$ in $\D \, \times \,L^{\infty}(0,\infty),$ the approximation $\HNLE$ satisfies the error estimate 
$$ \|H-\HNLE\| \le \,M\,(\alpha +(1+\frac{1}{\alpha}+ \|\lae-\lambda\|) \,\|\Ne-N\|, $$
for some constant $M$ which depends on  $P$, $g$, $\rho ^{-1}$, and $N$. 
\newline If $(\Ne,\lae)$ satisfies $\|(\Ne-N,\lae-\lambda)\| \le \varepsilon,$ we can choose the regularization parameter $ \alpha=\sqrt{\varepsilon} $, to conclude 
$$ \|H-\HNLE\| \le M(\sqrt{\varepsilon}+\varepsilon+\varepsilon^2). $$

\end{thm}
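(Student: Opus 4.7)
The plan is to mimic the proof of Theorem \ref{tikteo} but with the quasi-reversibility filters
$$f^{j}(\alpha,\xi)=h^{j}(\alpha,\xi)=\frac{1}{\klin}$$
in place of the Tikhonov filters, and then to invoke Theorem \ref{spectregu}. First, applying $\foune$ to Eq.~\eqref{meq} together with Proposition \ref{fouanddif} (to represent $\foune(\frac{d}{dx}gN)$ as the quasi-multiplication expression $L(N)$) and the multiplication identity \eqref{convmul} yields
$$\foune \HN(\xi)=\frac{L(N)(\xi)+\lambda\,\foune N(\xi)}{\klio}.$$
Since $\foune\HNL=f^{j}L(N)+h^{j}\lambda\,\foune N$, a direct algebraic manipulation gives the error identity
$$\foune(\HN-\HNL)(\xi)=\frac{(2\pi i\xi+j)\alpha}{(\klio)(\klin)}\bigl(L(N)(\xi)+\lambda\,\foune N(\xi)\bigr).$$

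Next, using Hypothesis \ref{hipbobe} for the first denominator factor and Hypothesis \ref{hipunbobe} for the second, both are uniformly bounded below on $U$, so the error kernel is pointwise dominated by $C\alpha(1+|\xi|)$ on $U$. The $L^{2}$ assumptions $\xi\,\foune(\frac{d}{dx}gN)\in\Ld$ and $\xi\,\foune(\frac{d}{dx}N)\in\Ld$, combined with Proposition \ref{fouanddif} applied once to $gN$ (to relate $(1+|\xi|)L(N)$ to $\xi\foune(\frac{d}{dx}gN)$) and once to $N$ itself (to relate $(1+|\xi|)\foune N$ to $\xi\foune(\frac{d}{dx}N)$ via $\foune(N\derifeapr)$ and $\foune(N\derifeaseg)$), lead by the Plancherel theorem to the estimate
$$\|\HN-\HNL\|\le \alpha M,$$
where $M$ depends on $P,g,\rho^{-1}$ and $N$.

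I would then verify that the bilinear operator $H_{\alpha}:\D\times L^{\infty}(0,\infty)\to\PW$ is bounded with norm controlled by $C(1+\tfrac{1}{\alpha}+|\lambda|)$. The uniform bound $|h^{j}(\alpha,\xi)|\le 1/M$ follows directly from Hypothesis \ref{hipunbobe}. For $f^{j}$, the crucial observation is that $|\xi f^{j}(\alpha,\xi)|$ behaves like $1/(2\pi\alpha)$ for large $|\xi|$ and is uniformly bounded for $\xi$ on compact sets, giving $|\xi f^{j}(\alpha,\xi)|\le C(1+\tfrac{1}{\alpha})$; expanding $L(N)$ through Proposition \ref{fouanddif} and applying Plancherel then bounds $\|f^{j}L(N)\|_{\Ld}$ by $C(1+\tfrac{1}{\alpha})\|N\|$. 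Feeding this operator-norm bound together with the previous bound $\|\HN-\HNL\|\le\alpha M$ into Theorem \ref{spectregu} yields
$$\|H-\HNLE\|\le \|H_{\alpha}(\Ne-N,\lae-\lambda)\|+\alpha M \le M\bigl(\alpha+(1+\tfrac{1}{\alpha}+\|\lae-\lambda\|)\|\Ne-N\|\bigr),$$
which is the claimed inequality. Choosing $\alpha=\sqrt{\varepsilon}$ balances $\alpha M$ against $\varepsilon/\alpha$ and produces the final rate $M(\sqrt{\varepsilon}+\varepsilon+\varepsilon^{2})$.

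The main obstacle I anticipate is not conceptual but bookkeeping: one must carefully track how the hypothesis $\xi\,\foune(\frac{d}{dx}N)\in\Ld$ propagates through Proposition \ref{fouanddif} into a usable bound on the weighted Fourier norm of $N$ (which is what the factor $(2\pi i\xi+j)\alpha$ multiplied against $\lambda\foune N$ requires). This is where the smoothness/boundedness conditions on $\derifeapr$ and $\derifeaseg$ implicit in the definition of $\D$ are essential, and one must check that every quantity introduced in expanding $L(N)$ actually lies in $\Ld$ on $U$.
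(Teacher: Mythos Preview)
Your proposal is correct and follows exactly the strategy the paper indicates: the paper gives no explicit proof here, only the sentence ``Using the same ideas of the proof of Theorem \ref{tikteo}, we prove the next result,'' and your argument is precisely that adaptation (Fourier-transform the error, bound the resulting multiplier, then apply Theorem \ref{spectregu}). One minor remark: you invoke Hypothesis \ref{hipbobe} for the factor $\klio$, whereas only Hypothesis \ref{hipunbobe} is assumed in the statement; this is harmless since evaluating Hypothesis \ref{hipunbobe} at $\alpha=0$ immediately yields Hypothesis \ref{hipbobe}.
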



\subsection{The Landweber‘s method}
Using the Tikhonov and quasi-reversibility filters, we obtain an approximation error of order $O(\, \sqrt{\varepsilon} )$. We improve this error order, and thus we  obtain a better approximation. For that, we use a new filter, which is a modification of the classical Landweber filter
\begin{equation}
    \label{landweberfilterregulari}
    f(\alpha,\xi)=\frac{1}{\klio} \left(1-\left(1-\frac{1}{1+\xi^2} \right)^{{\alpha}}\right) \quad
\textrm{and} \quad h(\alpha,\xi)=\frac{1}{\klio} .
\end{equation}
To implement these filters in Theorem \ref{spectregu}, we first establish the following inequalities.


\begin{lem}
\label{tt} 
There exists a positive constant $A$, such that for all $x \in \R_{>0}$ and for all positive $\alpha$, the following estimate holds
$$| xf(x,\alpha)| \le A\sqrt{\alpha} ,$$
where $A$ depends on $s$, $P$, $g$, $\rho ^{-1}$. Moreover, for all $m \in \R$, such that  $m<2\alpha$, the following inequality holds
$$| \frac{1}{x^m} (1-\frac{1}{1+x^2})^{\alpha}| \le (\frac{m}{2 \alpha})^m .$$

\end{lem}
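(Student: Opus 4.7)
The overall plan is to reduce both estimates to scalar inequalities on the real line. The prefactor $1/(k\mathcal{F}_{\rho \, , s}P(\xi)-1)$ hidden inside $f(\alpha,\xi)$ is controlled by $1/c$ on $U$ thanks to Hypothesis~\ref{hipbobe}, where $c=\inf_U|k\mathcal{F}_{\rho \, , s}P-1|>0$; peeling it off absorbs a constant and lets me concentrate on the Landweber factor $1-(\xi^{2}/(1+\xi^{2}))^{\alpha}$ in the first estimate and on the pure rational-monomial expression in the second.

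For the first inequality I would substitute $u=\xi^{2}/(1+\xi^{2})\in[0,1)$ and apply two textbook bounds. Using $1-e^{-t}\le t$ for $t\ge 0$ with $t=-\alpha\ln u$ gives $1-u^{\alpha}\le \alpha\ln(1/u)$, and using $\ln(1+s)\le s$ with $s=1/\xi^{2}$ converts $\ln(1/u)=\ln(1+1/\xi^{2})$ into $1/\xi^{2}$. Multiplying them produces the key uniform estimate
\[
\xi^{2}\bigl(1-u^{\alpha}\bigr)\le \alpha,\qquad \alpha>0,\ \xi\in\mathbb{R}.
\]
The bound on $|\xi f(\alpha,\xi)|$ then follows from a small trick: since $1-u^{\alpha}\le 1$, I have $1-u^{\alpha}\le\sqrt{1-u^{\alpha}}$, and so
\[
|\xi|\,(1-u^{\alpha})\le |\xi|\sqrt{1-u^{\alpha}}=\sqrt{\xi^{2}(1-u^{\alpha})}\le \sqrt{\alpha}.
\]
Dividing by $c$ yields the claim with $A=1/c$.

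For the second inequality, the natural change of variable is again $y=x^{2}/(1+x^{2})$, so that $x^{2}=y/(1-y)$. This rewrites the left-hand side as
\[
\frac{1}{x^{m}}\left(1-\frac{1}{1+x^{2}}\right)^{\!\alpha}=\frac{x^{2\alpha-m}}{(1+x^{2})^{\alpha}}=y^{\alpha-m/2}(1-y)^{m/2},
\]
a Beta-type profile on $(0,1)$ whose exponents are positive by the assumption $m<2\alpha$. A one-line derivative computation locates the unique interior maximum at $y^{\star}=1-m/(2\alpha)$; evaluating the profile there and discarding the factor $(1-m/(2\alpha))^{\alpha-m/2}\le 1$ delivers the advertised bound.

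The main obstacle is obtaining the sharp $\sqrt{\alpha}$ rate in the first part. A direct use of Bernoulli's inequality $1-u^{\alpha}\le \alpha(1-u)$ only yields $|\xi|(1-u^{\alpha})\le \alpha/2$, a linear-in-$\alpha$ estimate that would spoil the improved Landweber error order motivating the whole lemma. The point is that $\xi^{2}(1-u^{\alpha})\le\alpha$ is the right object to target, and extracting a square root via $1-u^{\alpha}\le\sqrt{1-u^{\alpha}}$ is what upgrades $\alpha$ to $\sqrt{\alpha}$; the second inequality is, by contrast, a standard optimization once the correct variable $y$ has been chosen.
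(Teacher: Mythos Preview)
Your argument is correct and essentially coincides with the paper's: the same change of variable $z=\xi^{2}/(1+\xi^{2})$, the same reduction $(1-z^{\alpha})^{2}\le 1-z^{\alpha}$ (which you phrase as $1-u^{\alpha}\le\sqrt{1-u^{\alpha}}$) for the first estimate, and the same Beta-profile optimisation for the second. The only cosmetic difference is that you bound $1-u^{\alpha}$ by $\alpha\ln(1/u)\le\alpha/\xi^{2}$, whereas the paper uses the mean-value bound $1-z^{\alpha}\le\alpha(1-z)$; both lead to the identical intermediate inequality $\xi^{2}(1-z^{\alpha})\le\alpha$, and your logarithmic variant has the minor advantage of being valid for every $\alpha>0$ without restriction.

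One caveat: your optimisation actually yields $(m/(2\alpha))^{m/2}$, not the stated $(m/(2\alpha))^{m}$, since the maximum of $y^{\alpha-m/2}(1-y)^{m/2}$ at $y^{\star}=1-m/(2\alpha)$ is bounded by $(1-y^{\star})^{m/2}=(m/(2\alpha))^{m/2}$. The paper's proof has exactly the same feature (it bounds the \emph{square} by $(m/(2\alpha))^{m}$), so the discrepancy lies in the exponent written in the lemma rather than in either argument.
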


\begin{proof}

Let us prove the first inequality. Using the change of variables $z=1-\frac{1}{1+x^2}$, we see that
$$| xf(x,\alpha)|^2 \le M \frac{z}{1-z} (1-z^{\alpha})^2.$$
Since $z \in [0,1],$  then applying  the mean value inequality to the function $z^{\alpha}$, we obtain
$$ (1-z^{\alpha})^2 \le (1-z^{\alpha}) \le \alpha (1-z),$$
thus $| xf(x,\alpha)| \le M \sqrt{\alpha}$ as desired.
To prove the second inequality, we observe that at the point $ \omega=\frac{2\alpha-m}{2\alpha} $, the function $(1-z)^m z^{2\alpha-m}$  attains its  maximum for $x \in [0,1]$. Thus, for all $x \in \R$ we have
$$ | \frac{1}{x^m} (1-\frac{1}{1+x^2})^{\alpha}|^2 =(1-z)^{m} z^{2 \alpha-m} \le (1-\omega)^m=(\frac{m}{2\alpha})^m.$$
 \end{proof}

As a consequence of the previous lemma, we have  that the functions in  Eq.~\eqref{condifil} are bounded. Thus, the bilinear operator $H_{\alpha}$ is bounded. In fact, a straightforward computation shows that
$$ \|H_{\alpha}(N,\lambda)\|\le A( 1+{\sqrt{\alpha}}+ \|\lambda\|) \|N\|,$$
for some positive constant $A$, which only depends on $s$, $P$, $g$, $\rho ^{-1}$. Now, we apply Theorem~\ref{spectregu} to the Landweber filter\cite{reguinvlib}. For that, we require a smoothness condition of order $m>0$ for the function $N$

\begin{thm}[Landweber regularization] \label{landw} Assume that the probability $P$ satisfies Eq.~\eqref{conditionker}, and Hypothesis \ref{hipbobe}. Moreover, assume that $N \in \D$, satisfies all the assumptions of Proposition \ref{fouanddif}, and the smoothness condition with order $m \in \R_{>0}$
$$\xi^{ m+1} \, \foune(\frac{d}{dx} gN)  \in L^2(\R) \, .$$ 
Then, for all noisy measurement $(\Ne,\lae)$ of $(N,\lambda)$ in $\D \, \times \,L^{\infty}(0,\infty) $, the approximation $\HNLE$ defined in Eq.~\eqref{apro} using the spectral filter of Eq.~\eqref{landweberfilterregulari}  satisfies the error estimate 
$$ \|H-\HNLE\| \le \,M\,((\frac{m}{\alpha})^m +(1+\sqrt{\alpha}+\|\lae-\lambda)\|) \,\|\Ne-N\|), $$
for some constant $M$ which depends on $s$, $m$, $P$, $g$, $\rho ^{-1}$, and $N$. Suppose that $(\Ne,\lae)$ satisfies $\|(\Ne-N,\lae-\lambda)\| \le \varepsilon <2^{\frac{2m+3}{2}} \sqrt{m} $. We can choose the optimal parameter given by
$$ \alpha=(\frac{2m^{m+1}}{\varepsilon })^\frac{2}{2m+1}, $$
which satisfies $m<2\alpha$, to conclude 
$$ \|H-\HNLE\| \le M(\varepsilon+\varepsilon^\frac{2m}{2m+1}). $$

\end{thm}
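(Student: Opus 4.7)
The plan is to mirror the argument of Theorem \ref{tikteo}, but to replace the crude $O(\alpha)$ truncation bound by the finer pointwise estimate of Lemma \ref{tt}, so as to upgrade the decay to $(m/\alpha)^m$. The starting point is the Fourier identity coming from equation \eqref{meq} together with Proposition \ref{fouanddif}, namely
$$\foune H(\xi) = \frac{1}{\klio}\Bigl(\foune\!\left(\tfrac{d}{dx}gN\right)(\xi) + \lambda\, \foune N(\xi)\Bigr),$$
and the fact that, since the Landweber multiplier $h(\alpha,\xi) = 1/(\klio)$ reproduces the $\lambda$-component of $\foune H$ exactly, the $\lambda\,\foune N$ contributions cancel on subtraction, leaving
$$\foune\bigl(H-\HNL\bigr)(\xi) = \frac{1}{\klio}\left(1-\tfrac{1}{1+\xi^2}\right)^{\alpha}\foune\!\left(\tfrac{d}{dx}gN\right)(\xi).$$

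Next, I would invoke Hypothesis \ref{hipbobe} to absorb $|\klio|^{-1}$ into a constant on $U$, and apply the second inequality of Lemma \ref{tt} to dominate the damping factor $(1-\tfrac{1}{1+\xi^2})^{\alpha}$ by $(m/2\alpha)^{m}|\xi|^{m}$; pairing this against the smoothness weight provided by $\xi^{m+1}\foune(\tfrac{d}{dx}gN)\in L^{2}$ and invoking Plancherel yields the truncation estimate $\|H-\HNL\|\le M(m/\alpha)^{m}$, valid whenever $m<2\alpha$. For the data-propagation part, the first inequality of Lemma \ref{tt} (which controls $|\xi f(\alpha,\xi)|$ by $A\sqrt{\alpha}$) was already used in the paragraph preceding the statement to establish $\|H_{\alpha}(V,\mu)\|\le A(1+\sqrt{\alpha}+\|\mu\|)\|V\|$ on $\D\times L^{\infty}$; applying this with $(V,\mu)=(\Ne-N,\lae-\lambda)$ immediately produces the announced data-noise term.

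Combining the two estimates through the triangle-inequality scheme of Theorem \ref{spectregu} gives the main error inequality. The final statement about the optimal parameter is then obtained by minimizing $(m/\alpha)^{m}+\sqrt{\alpha}\,\varepsilon$ over $\alpha$: equating the derivative to zero yields $\alpha^{m+1/2}=2m^{m+1}/\varepsilon$, i.e.\ $\alpha=(2m^{m+1}/\varepsilon)^{2/(2m+1)}$, and the smallness restriction $\varepsilon<2^{(2m+3)/2}\sqrt{m}$ turns out to be exactly equivalent to $m<2\alpha$, so Lemma \ref{tt} applies at the chosen level. The main obstacle I anticipate is the careful matching between the exponent $m$ in Lemma \ref{tt} and the smoothness weight $\xi^{m+1}$ in the hypothesis, which must be calibrated so that the pointwise bound and the $L^{2}$ weight combine to exactly the stated $(m/\alpha)^{m}$ rate; verifying that the optimal $\alpha$ then lies in the admissibility range is a short but crucial computation. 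Everything else reduces to Plancherel and the triangle inequality.
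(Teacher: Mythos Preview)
Your proposal is correct and follows essentially the same route as the paper: derive the identity $\foune(H-\HNL)=\frac{1}{\klio}\bigl(1-\tfrac{1}{1+\xi^2}\bigr)^{\alpha}\foune(\tfrac{d}{dx}gN)$, apply the second inequality of Lemma~\ref{tt} together with the smoothness assumption to get the $(m/\alpha)^m$ truncation bound, and then invoke Theorem~\ref{spectregu} with the already established $\|H_\alpha(V,\mu)\|\le A(1+\sqrt{\alpha}+\|\mu\|)\|V\|$ estimate. Your additional remarks on why the $\lambda$-terms cancel, on the derivation of the optimal $\alpha$, and on the equivalence between $\varepsilon<2^{(2m+3)/2}\sqrt m$ and $m<2\alpha$ are details the paper leaves implicit but are all correct.
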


\begin{proof}
Using the Fourier transform in Equation \eqref{meq}, and Proposition \ref{fouanddif}, we have that
$$ \foune(\HN-\HNL)= \frac{1}{\klio} (1-\frac{1}{1+x^2})^{\alpha} \foune (\frac{d}{dx} gN ).$$
Next, we apply the second estimate of Lemma \ref{tt} to obtain
$$ \| \HN-\HNL\| \le M(\frac{m}{\alpha})^m \| \xi^{ m+1} \, \foune(\frac{d}{dx} gN) \|,$$
hence by Theorem \ref{spectregu}, we obtain the result. 
\end{proof}

\subsection{The unbounded case}

In many cases, the  functions in Equation \eqref{condicre} are not bounded. For instance, in the self-similar case. We now  extend our results to the unbounded case.  The idea is to regularize the functions in Equation \eqref{condicre} using a spectral filter. To do that, we write
$$T_1= \left(\derifeapr\right)^2 \quad  T_2= \left(\derifeaseg \right)^2,$$
and
$$g_{\alpha}=\frac{ g  }{1+ \alpha \,(\exp(g^2)+\exp(g^2 T_1)+\exp(g^2 T_2))},$$
where $\alpha \in \R_{>0},$ is the regularization parameter of the functions \eqref{condicre}.
Observe that the exponential decay guarantees that using $g_\alpha$  in the place of $g$, then the functions in Eq.~\eqref{condicre} are bounded. Now, we  show that if the function $N$ has fast decay, then the bounded function $g _\alpha$ can be used to regularize the inverse problem, even if $g$ is not bounded.
\begin{prop}Assume that the function $N$ satisfies
\label{propositionregulanolimi}
\begin{equation}
    \label{suavidadcondi}
    \frac{d}{dx} (g_\alpha (\exp(g^2)+\exp(g^2 T_1)+\exp(g^2 T_2) ) N)\, \, \in \ldn.
\end{equation}
We define $\Hg$ as the solution of Eq.~\eqref{meq}, where the solution is associated with the  function  $g_{\alpha}$ instead of $g$. Then, we have that
$$\|H-\Hg\| \le C \alpha\| \frac{d}{dx} (g(\exp(g^2)+\exp(g^2 T_1)+\exp(g^2 T_2) )N)\|,$$
where $C$ is a constant, which depends on the operator $\mathcal{K}$ and the number $k$.
\end{prop}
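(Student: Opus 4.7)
The plan is to exploit the fact that $H$ and $H^{\alpha}$ solve the same linear equation $(k\mathcal{K}-Id)(\cdot) = \frac{d}{dx}(g \cdot N) + \lambda N$, but with $g$ replaced by $g_{\alpha}$ on the source side, so the difference $H - H^{\alpha}$ is governed by a single application of $(k\mathcal{K}-Id)^{-1}$ to the source discrepancy. The invertibility of $k\mathcal{K}-Id$ on $\PW$ was already established in Section~\ref{sectinver}; its operator norm depends only on $\mathcal{K}$ and $k$, yielding the constant $C$.

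Concretely, I would first subtract the two defining equations to obtain
\begin{equation*}
(k\mathcal{K}-Id)(H - H^{\alpha}) \;=\; \frac{d}{dx}\bigl((g-g_{\alpha})N\bigr),
\end{equation*}
so that by the bounded inverse of $k\mathcal{K}-Id$,
\begin{equation*}
\|H - H^{\alpha}\| \;\le\; C \left\|\frac{d}{dx}\bigl((g-g_{\alpha})N\bigr)\right\|.
\end{equation*}

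The core observation is then a clean algebraic identity. Writing $\phi = \exp(g^2)+\exp(g^2 T_1)+\exp(g^2 T_2)$, one checks directly that
\begin{equation*}
g - g_{\alpha} \;=\; g - \frac{g}{1+\alpha \phi} \;=\; \frac{\alpha\,g\,\phi}{1+\alpha \phi} \;=\; \alpha\, g_{\alpha}\, \phi,
\end{equation*}
so that
\begin{equation*}
\frac{d}{dx}\bigl((g-g_{\alpha})N\bigr) \;=\; \alpha \,\frac{d}{dx}\bigl(g_{\alpha}\,\phi\, N\bigr).
\end{equation*}
The smoothness hypothesis \eqref{suavidadcondi} guarantees that the right-hand side lies in $\ldn$, which is where the factor $\alpha$ in the conclusion comes from.

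The step requiring the most care is passing from $g_{\alpha}\phi N$ to $g\phi N$ on the right-hand side as written in the statement. Since $g_{\alpha}\phi = g\phi/(1+\alpha\phi)$ and the multiplier $1/(1+\alpha\phi)$ together with its $x$-derivative $-\alpha \phi'/(1+\alpha\phi)^2$ are uniformly bounded (the latter by $1$ after absorbing one power of $\alpha$ into the factor already extracted), the product rule lets one control $\|\frac{d}{dx}(g_{\alpha}\phi N)\|$ by $\|\frac{d}{dx}(g\phi N)\|$ up to a constant that can be folded into $C$. This is the main technical obstacle, because one has to ensure that differentiating the damping factor does not produce a term that blows up as $\alpha \to 0$; the exponential terms inside $\phi$ are precisely what make this uniform bound work. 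Combining the invertibility estimate with this final comparison yields the stated inequality.
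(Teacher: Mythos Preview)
Your approach is essentially the same as the paper's: subtract the two defining equations to get $(k\mathcal{K}-Id)(H-H^{\alpha}) = \frac{d}{dx}\bigl((g-g_{\alpha})N\bigr)$, use the algebraic identity $g-g_{\alpha} = \alpha\,g\phi/(1+\alpha\phi)$, and invoke the bounded inverse (via the Fourier multiplier bound $\|1/\mathcal{F}_{\rho,s}(k\mathcal{K}-Id)\|_{\infty}$) to extract the factor $\alpha$. The paper's proof stops right after that step and writes down the final inequality directly, without commenting on the passage from $g_{\alpha}\phi N$ to $g\phi N$; you go slightly further by flagging this comparison explicitly and sketching a product-rule argument for it, so your write-up is, if anything, a bit more careful than the paper's at that point.
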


\begin{proof}Observe that
$$ (k \mathcal{K}-Id)(H-\Hg)= {\frac{d}{dx} (g-g_\alpha)N}.$$
Since
$$g-g_\alpha=g \left( \frac{\alpha \exp(g^2)+\exp(g^2 T_1)+\exp(g^2 T_2)}{1+ \alpha \,(\exp(g^2)+\exp(g^2 T_1)+\exp(g^2 T_2))} \right),$$
then, using the Fourier transform $\mathcal{F}_{\rho \, , s}$, we obtain
$$ \left\| H-\Hg \right\| \le \alpha \left\|\frac{1}{\mathcal{F}_{\rho \, , s} (k \mathcal{K}-Id)}\right\|_{\infty} \left \| \frac{d}{dx} (g(exp(g^2)+exp(g^2 T_1)+exp(g^2 T_2) )N) \right\|$$
\end{proof}
Thus, the function $\Hg$ is a controlled approximation for $H$. In this case, the function $\Hg$ is the solution of Eq.~\eqref{meq}, associated with the bounded function $g_\alpha$. Therefore, we can use some of the previous methods (Tikhonov or Landweber) to regularize  $\Hg$, and by Proposition \ref{propositionregulanolimi} to regularize $H$.

\section{Examples} \label{ejempl}

To use the previous regularization methods, we need to verify that the probability $P$ satisfies Eq.~\eqref{conditionker}, and some of the Hypotheses~\ref{hipbobe} or  \ref{hipunbobe}. In this section, we study some examples which satisfy the previous conditions. First, we discuss examples arising from the self-similar probabilities, that is, when $\rho(x)=e^x$ and $k=2$.
\par To check  Hypothesis \ref{hipunbobe}, it is sufficient to consider the case $j=0$. In fact, if Hypothesis \ref{hipunbobe} holds for $j=0$, then by triangle inequality we see that for all $j \neq 0$ and $\alpha \in [ 0,\min{(C, M/2 |j|)  } ]$
$$\frac{M}{2}\le M- |j|\alpha \le |k \foune(P)(\xi)-1+2\pi i \alpha \xi+ j \alpha|.$$
Thus, the Hypothesis \ref{hipunbobe} holds for all real numbers $j$. 
We now study for which values of $s$, and open sets $U$, the Hypothesis \ref{hipunbobe} holds. Without loss of generality, we assume that $j=0$.

\subsection{The equal-mitosis}
An important example of a self-similar case is the equal-mitosis. In equal-mitosis the conditional probability is given by $P= \delta_{x=\frac{1}{2}}$. In this case, the Fourier transform of $P$ is given by
$$ \foune(P)(\xi)=2 \left( \frac{1}{2} \right)^{\pi s-2 \pi i \xi}.$$
Thus, by the triangle inequality 
\begin{equation}
    \label{estimatieje}
    |2^{2-\pi s} -1| \le | \foune(P)(\xi) -1|.
\end{equation}
Therefore, we conclude that this probability  satisfies the Condition \eqref{conditionker} and Hypothesis \ref{hipbobe},  for all $s \in \R \setminus \{\frac{2}{\pi}\}$. 
In order to verify Hypothesis \ref{hipunbobe}, we consider the following cases.
\begin{itemize}
\item \textbf{First case:} $s >$ 2/$\pi$.
\newline For such $s$, the Hypothesis \ref{hipunbobe} is satisfied for all $\xi \in\R $. In fact, by  the triangle inequality
$$1-2^{2-\pi s} <\sqrt{1+(2\pi \alpha \xi)^2}-2^{2-\pi s}\le |2 \foune(P)(\xi)-1+2\pi i \alpha \xi|. $$
\item \textbf{Second case:} $s <$ 2/$\pi$.
\newline In this case, the Hypothesis \ref{hipunbobe} holds for each  open bounded set, and $\alpha$ small enough. In fact, using  triangle inequality we have
$$  |\foune(P)(\xi)-1 |-2\pi i \alpha \xi \le |2 \foune(P)(\xi)-1+2\pi i \alpha \xi|. $$
 Then, using estimate \ref{estimatieje}, we conclude that for $\alpha$ small the expression $|\foune(P)(\xi)-1 |-2\pi i \alpha \xi $ is positive.
\end{itemize}
Thus, we can apply our methodology to deal with the inverse problem for these cases. In Section  \ref{numer}, we will develop some numerical simulations for the previous cases and see the effectiveness of our approach.


\section{Numerical solution of the inverse problem}\label{numer}

In this section, we recover numerically the solution of the inverse problem using the regularization methods proposed in Section \ref{secregu}. That is,  we recover $H$ from noisy measurements of $N$. If we assume that the noisy measurement $N_{\varepsilon}$ is a smooth function satisfying Condition \ref{suavidadcondi}, then there exists a unique solution $H^{\varepsilon}_{\alpha}$ for the Equation \eqref{meq} associated with $N_{\varepsilon}$. The purpose of this section is to explore numerically how close is $H^{\varepsilon}_{\alpha}$ to $H$, when the noisy data $N_{\varepsilon}$ is close to $N$. 
\newline To do that, we first construct an approximation for $N,$ using the numerical schemes proposed in \cite{dopezu,doti}, and then, we add random  noise  to the data $N.$
We construct the approximation $H^{\varepsilon}_{\alpha}$ using  Equation \eqref{apro}, where $\alpha$ is  the optimal regularization parameter for each method.

\subsection{Parametric specification.}

We now present some numerical simulations for  the equal-mitosis case. Here, the parameters are $s=0$, $k=2$ and $\rho(x)=e^x$, and the probability distribution is given by
$$ P=\delta_{\frac{1}{2}}.$$
To guarantee the existence and uniqueness of the solution of the direct problem, and also the condition Eq.~\eqref{condicre}, we select fragmentation and growth rates with fast decay at $0$ and $\infty$. See \cite{doga}. To be more specific, we use the growth rate $g(x)=xe^{-(x+1/x)}$ and the fragmentation rate $$B(x)=x^{2}e^{-(x+1/x)}.$$
In this experiment, we use the space $L^2_{\rho,0}$  to recover the function $H$ on $(0,10]$. We also use the parameter $j=1$ for the quasi-reversibility method and $m=10$ for the Landweber filter.
\subsection{Construction of \texorpdfstring{$N$.}{constructionn}}
We  construct the function $N$ on the interval $(0,3]$ using the numerical scheme proposed in \cite{dopezu,doti}. Here, the initial condition is given by
$$n_{0}(x)=e^{-(x-8)^{2}/2}.$$
We use  a regular grid on $ (0,3]$ with $500$ points. For the evolution process, we use a regular grid on $(0,200]$, with $10^{4}$ points.  We plot the function $N$ in Figure  \ref{constN}.

\begin{figure}
 \centering
  \subfloat[Equal-mitosis case]{
    \includegraphics[width=0.5\textwidth]{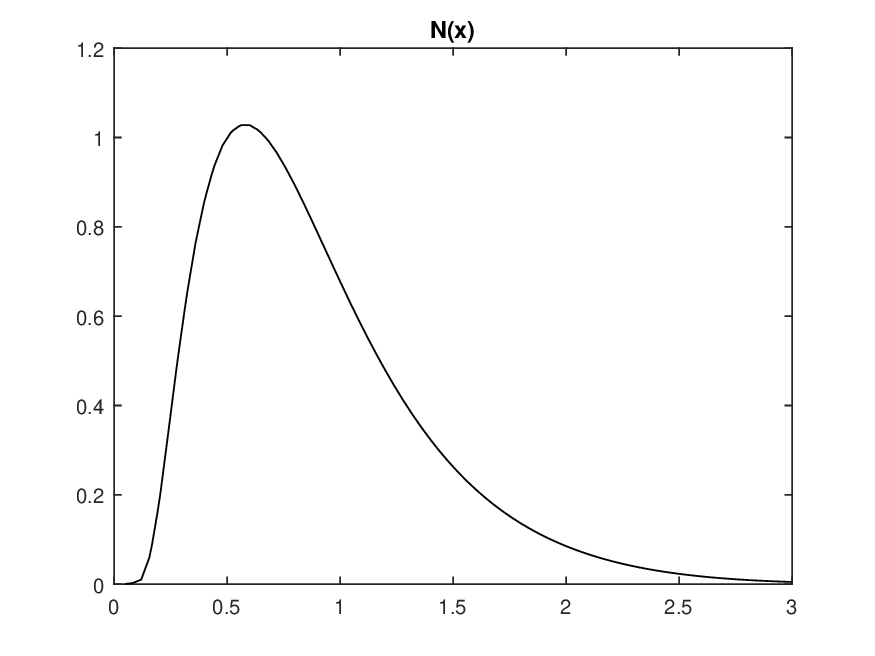}}
\caption{Construction of $N$, solution of the direct problem for the equal- mitosis case.}
\label{constN}
\end{figure}
\subsection{Reconstruction of \texorpdfstring{$H$ and $B$.}{handb}}
We now consider a noisy approximation $(N_{\varepsilon},\lambda_\varepsilon)$ for the eigenpair $(N,\lambda)$,  obtained by adding a random noise  to the data. That is, we assume that 
$$N_{\varepsilon}=\max (N+R_{\varepsilon},0), \quad \lambda_{\varepsilon}=\max (\lambda+S_{\varepsilon},0) ,$$
where $R_{\varepsilon},S_{\varepsilon}$ are random noises uniformly distributed in $[-\varepsilon, \varepsilon]$. We recover the approximation $H^{\varepsilon}_{\alpha}$ using the noisy measurement $(N_\varepsilon,\lambda_\varepsilon)$.  We plot the approximation $H^{\varepsilon}_{\alpha}$ for different noise levels $\varepsilon$. Here, we use the values $\varepsilon=10^{-2}$ (Figure \ref{reconstr1}), and $\varepsilon= 10^{-3}$ (Figure \ref{reconstr2}). The parameters used are $m=10$ for the Tikhonov method and $j=1$ for the Landweber method.
\par To recover the fragmentation rate $B$ from $H=BN$ we use the truncate division by $N$. That is, we define $B(x)= H(x)/N(x)$ if $N(x)\neq 0,$ and zero otherwise. The following figures show the recovered function $B$ for the parameters $\varepsilon=10^{-2}$, and  $\varepsilon= 10^{-3}$. Observe that the instabilities near  $x=3$ of the reconstructed function $B$ are due to the fact that the  fast decay of $N$ near $x=3$ affects the  truncated division.

\begin{figure}
 \centering
  \subfloat[Reconstruction of H]{
    \includegraphics[width=0.5\textwidth]{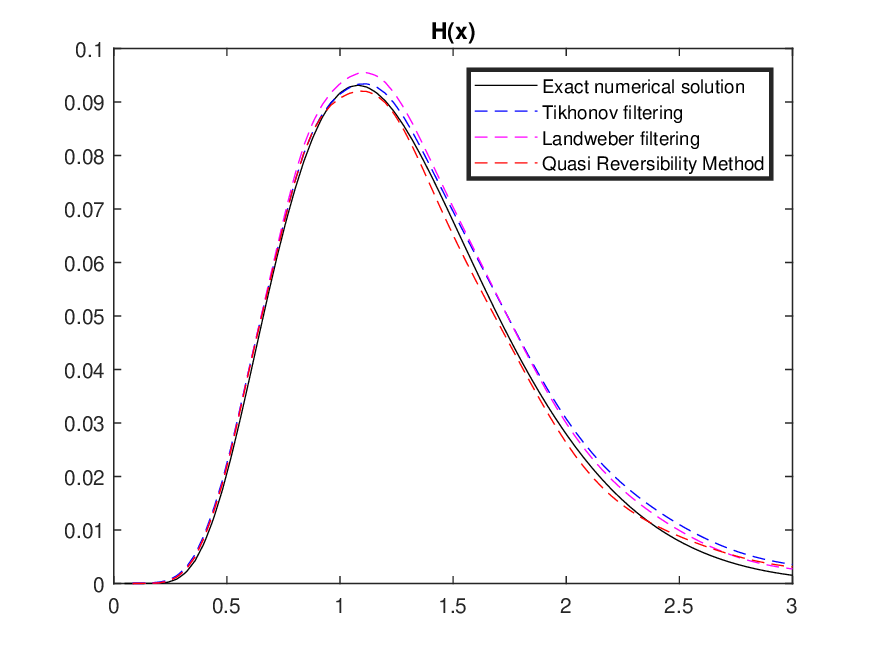}}
  \subfloat[Reconstruction of B]{
    \includegraphics[width=0.5\textwidth]{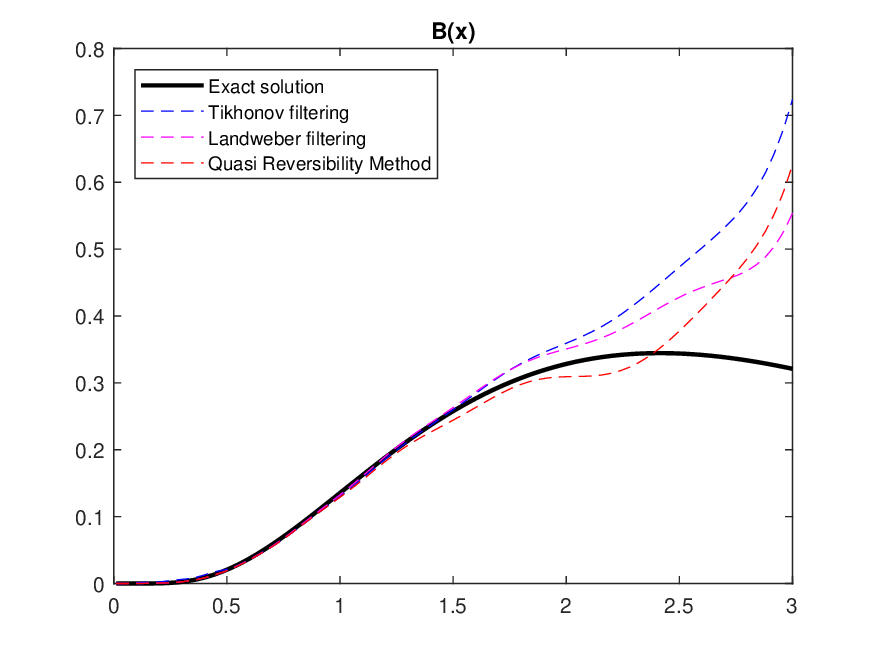}}
 \caption{Numerical reconstruction of $H$ and $B$, using the noise level  $\varepsilon=10^{-2}$.}
 \label{reconstr1}
\end{figure}
\begin{figure}
 \centering
  \subfloat[Reconstruction of H]{
    \includegraphics[width=0.5\textwidth]{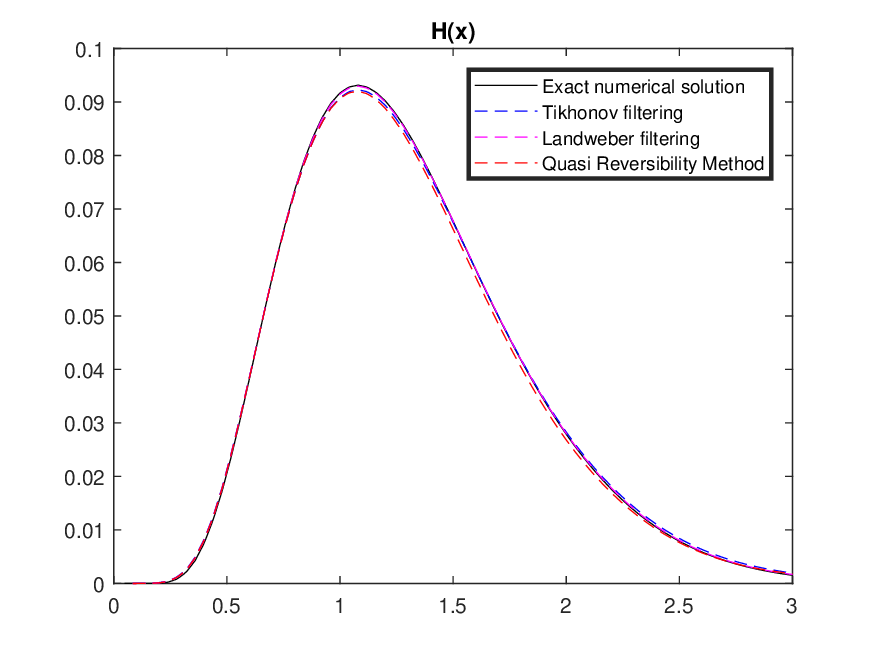}}
  \subfloat[Reconstruction of B]{
    \includegraphics[width=0.5\textwidth]{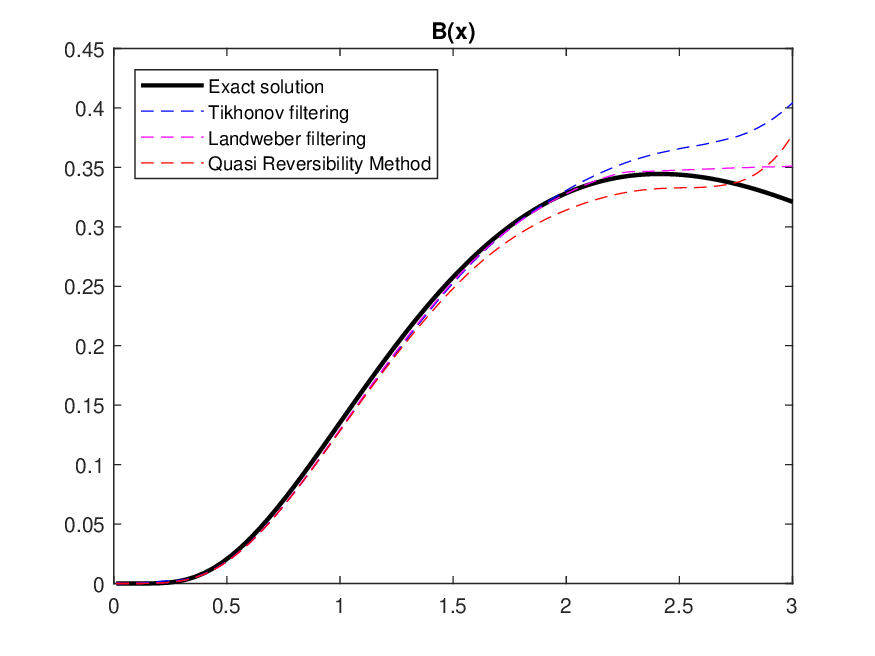}}
 \caption{Numerical reconstruction of $H$ and $B$, using the noise level  $\varepsilon=10^{-3}$.}
 \label{reconstr2}
\end{figure}

\subsection{Numerical Error.}

We compare the numerical error of the reconstructions of the functions $B$ and $H$ for small values of $ \varepsilon $. Here, we use the $L^2 [0,3]$ norm to estimate this error. This norm is computed using rectangular integration. For this experiment, we assume that $\epsilon \in [10^{-4},0.5]$. We plot the error in logarithmic scale in Figure \ref{figuerro}. Observe that for small values of $\epsilon$ the Landweber filter gives a better reconstruction, which is following our theoretical results.
\begin{figure}
 \centering
  \subfloat[Numerical error of the reconstruction of $B$ ]{
    \includegraphics[width=0.5\textwidth]{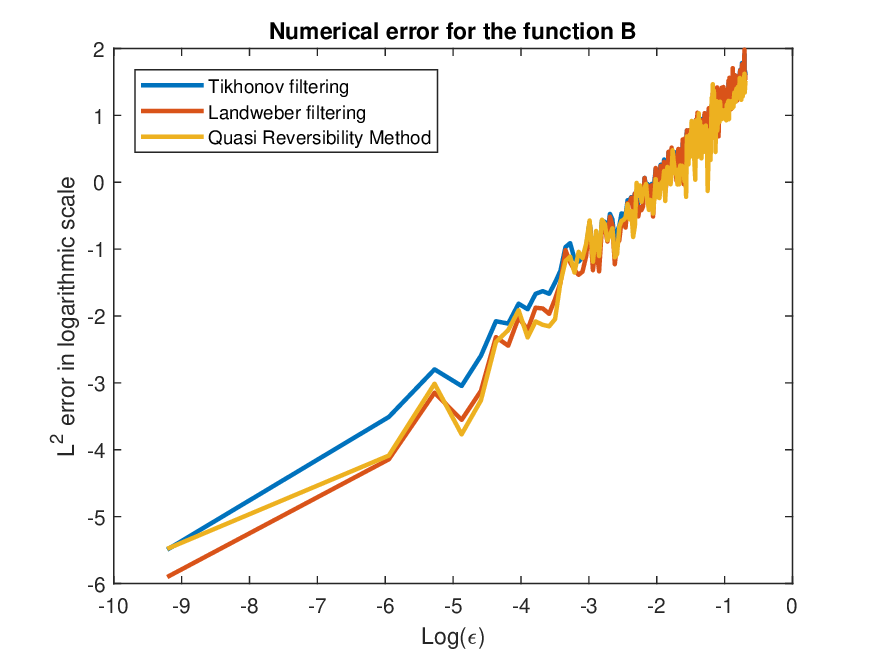}}
  \subfloat[Numerical error of the reconstruction of $H$]{
    \includegraphics[width=0.5\textwidth]{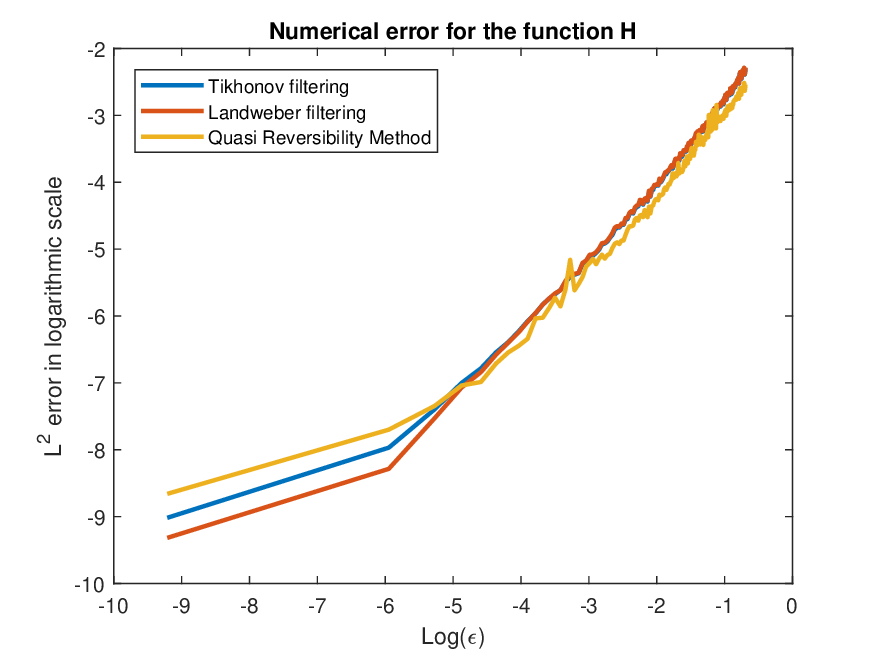}}
 \caption{Numerical error of the reconstruction of the functions $B$ and $H$ using the $L^2$ norm for several values of $\epsilon$. Note that errors are on a logarithmic scale}
 \label{figuerro}
\end{figure}

\section{Conclusion}

In this article,  we treated the inverse problem for the growth-fragmentation equation of a wide class of transition probabilities. Namely, we deal with transport probabilities which are generalizations of self-similar probabilities.
We developed a new approach to regularize the inverse problem associated with the transport probabilities. Our approach is based on the Fourier transform theory for locally compact groups. We regularized the Fourier transform of differential operators using several filters in the spectral variable, such as modifications of the  Landweber and Tikhonov filters.
\par For each method, we obtained their respective error estimates. The Landweber method provides an algorithm to recover the fragmentation rate $B$, with order  $O(\varepsilon^\frac{2m}{2m+1})$, where $m$ is the degree of smoothness of $B$, as proved in Theorem \ref{landw}. The error obtained using the Landweber method is better compared with other methods. This fact was verified by numerical simulations.
\par Our theoretical approach has been focused on transport operation induced by diffeomorphisms. A natural continuation of this work is to deal with transport operations induced by arbitrary functions.  Another possible follow-up is to apply our methodology to problems arising in data science, as well as biological problems as in \cite{conclus1,conclu2}.

One potential application of the problem under consideration concerns modeling  normal prion protein (PrP(C)) and infectious prion protein (PrP(Sc)) populations interacting in an infected host \cite{propol1}. Indeed, Perthame and Doumic in \cite{calvodoumicperthame} proved key asymptotic results for the stable population where our results could be applied.

\par 
We note that even in the particular zero-growth case,  $g\equiv0$,  we have an interesting topic for further exploration since it allows for some simplifications in the operator of Equation~\eqref{meq}. In such cases Doumic {\it et al.} \cite{DET} have addressed the issue of reconstructing the conditional probability kernel under a self-similar assumption and knowledge of the fragmentation rate $B$. This shall be addressed in a future publication.

\section*{Acknowledgements}
AAG and JPZ acknowledge support from the FSU-2020-09 grant from Khalifa University. The authors acknowledge the financial support provided by CAPES,  Coordenação de Aperfeiçoamento de Pessoal de Nível Superior, grant number 88887.311757/2018-00, CNPq, Conselho Nacional de Desenvolvimento Científico e Tecnológico, grant numbers 308958/2019-5 and 307873/2013-7, and FAPERJ, Fundação Carlos Chagas Filho de Amparo à Pesquisa do Estado do Rio de Janeiro, grant numbers E-26/200.899/2021 and  E-26/202.927/2017.

\section*{References}

\bibliography{bibli}

\begin{thebibliography}{10}

\bibitem{celdi1}
Benoit. Perthame.
\newblock {\em Transport Equations in Biology}.
\newblock Frontiers in Mathematics. Birkh{\"a}user Basel, 2006.

\bibitem{celdi2}
Johan~A. Metz and Odo. Diekmann.
\newblock Formulating models for structured populations.
\newblock In Johan~A. Metz and Odo. Diekmann, editors, {\em The Dynamics of
  Physiologically Structured Populations}, pages 78--135, Berlin, Heidelberg,
  1986. Springer Berlin Heidelberg.

\bibitem{celdi3}
Britta Basse, Bruce Baguley, Elaine Marshall, Wayne Joseph, Bruce van Brunt,
  Graeme Wake, and David Wall.
\newblock A mathematical model for analysis of the cell cycle in cell lines
  derived from human tumors.
\newblock {\em Journal of mathematical biology}, 47:295--312, 11 2003.

\bibitem{laurent}
Philippe Laurençot and Benoit Perthame.
\newblock Exponential decay for the growth-fragmentation/cell-division
  equations.
\newblock {\em Commun. Math. Sci.}, 7(2):503--510, 06 2009.

\bibitem{propol1}
Meredith Greer, Laurent Pujo-Menjouet, and Glenn Webb .
\newblock A mathematical analysis of the dynamics of prion proliferation.
\newblock {\em Journal of theoretical biology}, 242:598--606, 11 2006.

\bibitem{teleco}
François Baccelli, David McDonald, and Julien Reynier.
\newblock A mean-field model for multiple tcp connections through a buffer
  implementing red.
\newblock {\em Performance Evaluation}, 49:77--97, 09 2002.

\bibitem{pery1}
Benoît Perthame and Lenya Ryzhik.
\newblock Exponential decay for the fragmentation or cell-division equation.
\newblock {\em Journal of Differential Equations}, 210(1):155--177, 2005.

\bibitem{mimipe}
Philippe Michel, Stéphane Mischler, and Beno\^it Perthame.
\newblock General entropy equations for structured population models and
  scattering.
\newblock {\em Comptes Rendus Mathematique}, 338(9):697--702, 2004.

\bibitem{domazu}
Marie Doumic, Pedro Maia, and Jorge Zubelli.
\newblock On the calibration of a size-structured population model from
  experimental data.
\newblock {\em Acta biotheoretica}, 58:405--13, 12 2010.

\bibitem{dopezu}
Marie Doumic, Benoit Perthame, and Jorge Zubelli.
\newblock Numerical solution of an inverse problem in size-structured
  population dynamics, in "inverse problems.
\newblock {\em Inverse Problems}, 25:045008, 04 2009.

\bibitem{bodoes}
Thibault Bourgeron, Marie Doumic, and Miguel Escobedo.
\newblock Estimating the division rate of the self-similar growth-fragmentation
  equation.
\newblock {\em Inverse Problems}, 30, 01 2014.

\bibitem{pezu}
Benoit Perthame and Jorge~P Zubelli.
\newblock On the inverse problem for a size-structured population model.
\newblock {\em Inverse Problems}, 23(3):1037--1052, apr 2007.

\bibitem{doti}
Marie Doumic and Léon Tine.
\newblock Estimating the division rate for the growth-fragmentation equation.
\newblock {\em Journal of mathematical biology}, 67, 06 2012.

\bibitem{rud}
Walter. Rudin.
\newblock {\em Fourier Analysis on Groups}.
\newblock Wiley Classics Library. Wiley, 1990.

\bibitem{xavierdua}
J.D. Zuazo.
\newblock {\em Fourier Analysis}.
\newblock American Mathematical Soc., 2001.

\bibitem{reguinvlib}
Heinz Engl, Martin Hanke, and Andreas Neubauer.
\newblock {\em Regularization of Inverse Problems}.
\newblock Mathematics and Its Applications. Springer Netherlands, 1996.

\bibitem{lion}
R.~Lattès and J.-L Lions.
\newblock {\em M{\'e}thode de quasi-r{\'e}versibilit{\'e} et applications}.
\newblock Travaux et recherches math{\'e}matiques. Dunod, 1967.

\bibitem{kato1995perturbation}
T.~Kato.
\newblock {\em Perturbation Theory for Linear Operators}.
\newblock Springer Berlin Heidelberg, 1995.

\bibitem{doga}
Marie Doumic and Pierre Gabriel.
\newblock Eigenelements of a general aggregation-fragmentation model.
\newblock {\em Mathematical Models and Methods in Applied Sciences},
  20:757--783, 07 2009.

\bibitem{conclus1}
Jean-Baptiste Bardet, Alejandra Christen, Arnaud Guillin, Florent Malrieu, and
  Pierre-André Zitt.
\newblock {Total variation estimates for the TCP process}.
\newblock {\em Electronic Journal of Probability}, 18, 2013.

\bibitem{conclu2}
Fadia~Bekkal Brikci, Jean Clairambault, and Benoǐt Perthame.
\newblock Analysis of a molecular structured population model with possible
  polynomial growth for the cell division cycle.
\newblock {\em Mathematical and Computer Modelling}, 47(7):699--713, 2008.

\bibitem{calvodoumicperthame}
Juan Calvo, Marie Doumic, and Beno{\^\i}t Perthame.
\newblock {Long-Time} asymptotics for polymerization models.
\newblock {\em Communications in Mathematical Physics}, 363(1):111--137,
  October 2018.

\bibitem{DET}
Marie Doumic, Miguel Escobedo, and Magali Tournus.
\newblock Estimating the division rate and kernel in the fragmentation
  equation.
\newblock {\em Annales de l'Institut Henri Poincaré C, Analyse non linéaire},
  35(7):1847--1884, 2018.

\end{thebibliography}

\end{document}